\documentclass[11pt, reqno]{amsart}
\usepackage{amssymb}
\usepackage{eucal}
\usepackage{amsmath}
\usepackage{amscd}
\usepackage[dvips]{color}
\usepackage{multicol}
\usepackage[all]{xy}           
\usepackage{graphicx}
\usepackage{color}
\usepackage{colordvi}
\usepackage{xspace}
\usepackage{tikz}

\usepackage[active]{srcltx} 



\topmargin -.8cm \textheight 21cm \oddsidemargin 0cm
\evensidemargin -0cm \textwidth 16cm


\begin{document}
\newcommand {\emptycomment}[1]{} 

\baselineskip=14pt
\newcommand{\nc}{\newcommand}
\newcommand{\delete}[1]{}
\nc{\mfootnote}[1]{\footnote{#1}} 
\nc{\todo}[1]{\tred{To do:} #1}

\nc{\mlabel}[1]{\label{#1}}  
\nc{\mcite}[1]{\cite{#1}}  
\nc{\mref}[1]{\ref{#1}}  
\nc{\mbibitem}[1]{\bibitem{#1}} 

\delete{
\nc{\mlabel}[1]{\label{#1}  
{\hfill \hspace{1cm}{\bf{{\ }\hfill(#1)}}}}
\nc{\mcite}[1]{\cite{#1}{{\bf{{\ }(#1)}}}}  
\nc{\mref}[1]{\ref{#1}{{\bf{{\ }(#1)}}}}  
\nc{\mbibitem}[1]{\bibitem[\bf #1]{#1}} 
}

\newtheorem{thm}{Theorem}[section]
\newtheorem{lem}[thm]{Lemma}
\newtheorem{cor}[thm]{Corollary}
\newtheorem{pro}[thm]{Proposition}
\newtheorem{ex}[thm]{Example}
\newtheorem{rmk}[thm]{Remark}
\newtheorem{defi}[thm]{Definition}
\newtheorem{pdef}[thm]{Proposition-Definition}
\newtheorem{condition}[thm]{Condition}

\renewcommand{\labelenumi}{{\rm(\alph{enumi})}}
\renewcommand{\theenumi}{\alph{enumi}}

\nc{\tred}[1]{\textcolor{red}{#1}}
\nc{\tblue}[1]{\textcolor{blue}{#1}}
\nc{\tgreen}[1]{\textcolor{green}{#1}}
\nc{\tpurple}[1]{\textcolor{purple}{#1}}
\nc{\btred}[1]{\textcolor{red}{\bf #1}}
\nc{\btblue}[1]{\textcolor{blue}{\bf #1}}
\nc{\btgreen}[1]{\textcolor{green}{\bf #1}}
\nc{\btpurple}[1]{\textcolor{purple}{\bf #1}}

\nc{\cm}[1]{\textcolor{red}{Chengming:#1}}
\nc{\yy}[1]{\textcolor{blue}{Yanyong: #1}}
\nc{\lit}[2]{\textcolor{blue}{#1}{}} 
\nc{\yh}[1]{\textcolor{green}{Yunhe: #1}}


\nc{\twovec}[2]{\left(\begin{array}{c} #1 \\ #2\end{array} \right )}
\nc{\threevec}[3]{\left(\begin{array}{c} #1 \\ #2 \\ #3 \end{array}\right )}
\nc{\twomatrix}[4]{\left(\begin{array}{cc} #1 & #2\\ #3 & #4 \end{array} \right)}
\nc{\threematrix}[9]{{\left(\begin{matrix} #1 & #2 & #3\\ #4 & #5 & #6 \\ #7 & #8 & #9 \end{matrix} \right)}}
\nc{\twodet}[4]{\left|\begin{array}{cc} #1 & #2\\ #3 & #4 \end{array} \right|}

\nc{\rk}{\mathrm{r}}
\newcommand{\g}{\mathfrak g}
\newcommand{\h}{\mathfrak h}
\newcommand{\pf}{\noindent{$Proof$.}\ }
\newcommand{\frkg}{\mathfrak g}
\newcommand{\frkh}{\mathfrak h}
\newcommand{\Id}{\rm{Id}}
\newcommand{\gl}{\mathfrak {gl}}
\newcommand{\ad}{\mathrm{ad}}
\newcommand{\add}{\frka\frkd}
\newcommand{\frka}{\mathfrak a}
\newcommand{\frkb}{\mathfrak b}
\newcommand{\frkc}{\mathfrak c}
\newcommand{\frkd}{\mathfrak d}
\newcommand {\comment}[1]{{\marginpar{*}\scriptsize\textbf{Comments:} #1}}

\nc{\gensp}{V} 
\nc{\relsp}{\Lambda} 
\nc{\leafsp}{X}    
\nc{\treesp}{\overline{\calt}} 

\nc{\vin}{{\mathrm Vin}}    
\nc{\lin}{{\mathrm Lin}}    

\nc{\gop}{{\,\omega\,}}     
\nc{\gopb}{{\,\nu\,}}
\nc{\svec}[2]{{\tiny\left(\begin{matrix}#1\\
#2\end{matrix}\right)\,}}  
\nc{\ssvec}[2]{{\tiny\left(\begin{matrix}#1\\
#2\end{matrix}\right)\,}} 

\nc{\typeI}{local cocycle $3$-Lie bialgebra\xspace}
\nc{\typeIs}{local cocycle $3$-Lie bialgebras\xspace}
\nc{\typeII}{double construction $3$-Lie bialgebra\xspace}
\nc{\typeIIs}{double construction $3$-Lie bialgebras\xspace}

\nc{\bia}{{$\mathcal{P}$-bimodule ${\bf k}$-algebra}\xspace}
\nc{\bias}{{$\mathcal{P}$-bimodule ${\bf k}$-algebras}\xspace}

\nc{\rmi}{{\mathrm{I}}}
\nc{\rmii}{{\mathrm{II}}}
\nc{\rmiii}{{\mathrm{III}}}
\nc{\pr}{{\mathrm{pr}}}
\newcommand{\huaA}{\mathcal{A}}

\nc{\pll}{\beta}
\nc{\plc}{\epsilon}

\nc{\ass}{{\mathit{Ass}}}
\nc{\lie}{{\mathit{Lie}}}
\nc{\comm}{{\mathit{Comm}}}
\nc{\dend}{{\mathit{Dend}}}
\nc{\zinb}{{\mathit{Zinb}}}
\nc{\tdend}{{\mathit{TDend}}}
\nc{\prelie}{{\mathit{preLie}}}
\nc{\postlie}{{\mathit{PostLie}}}
\nc{\quado}{{\mathit{Quad}}}
\nc{\octo}{{\mathit{Octo}}}
\nc{\ldend}{{\mathit{ldend}}}
\nc{\lquad}{{\mathit{LQuad}}}

 \nc{\adec}{\check{;}} \nc{\aop}{\alpha}
\nc{\dftimes}{\widetilde{\otimes}} \nc{\dfl}{\succ} \nc{\dfr}{\prec}
\nc{\dfc}{\circ} \nc{\dfb}{\bullet} \nc{\dft}{\star}
\nc{\dfcf}{{\mathbf k}} \nc{\apr}{\ast} \nc{\spr}{\cdot}
\nc{\twopr}{\circ} \nc{\tspr}{\star} \nc{\sempr}{\ast}
\nc{\disp}[1]{\displaystyle{#1}}
\nc{\bin}[2]{ (_{\stackrel{\scs{#1}}{\scs{#2}}})}  
\nc{\binc}[2]{ \left (\!\! \begin{array}{c} \scs{#1}\\
    \scs{#2} \end{array}\!\! \right )}  
\nc{\bincc}[2]{  \left ( {\scs{#1} \atop
    \vspace{-.5cm}\scs{#2}} \right )}  
\nc{\sarray}[2]{\begin{array}{c}#1 \vspace{.1cm}\\ \hline
    \vspace{-.35cm} \\ #2 \end{array}}
\nc{\bs}{\bar{S}} \nc{\dcup}{\stackrel{\bullet}{\cup}}
\nc{\dbigcup}{\stackrel{\bullet}{\bigcup}} \nc{\etree}{\big |}
\nc{\la}{\longrightarrow} \nc{\fe}{\'{e}} \nc{\rar}{\rightarrow}
\nc{\dar}{\downarrow} \nc{\dap}[1]{\downarrow
\rlap{$\scriptstyle{#1}$}} \nc{\uap}[1]{\uparrow
\rlap{$\scriptstyle{#1}$}} \nc{\defeq}{\stackrel{\rm def}{=}}
\nc{\dis}[1]{\displaystyle{#1}} \nc{\dotcup}{\,
\displaystyle{\bigcup^\bullet}\ } \nc{\sdotcup}{\tiny{
\displaystyle{\bigcup^\bullet}\ }} \nc{\hcm}{\ \hat{,}\ }
\nc{\hcirc}{\hat{\circ}} \nc{\hts}{\hat{\shpr}}
\nc{\lts}{\stackrel{\leftarrow}{\shpr}}
\nc{\rts}{\stackrel{\rightarrow}{\shpr}} \nc{\lleft}{[}
\nc{\lright}{]} \nc{\uni}[1]{\tilde{#1}} \nc{\wor}[1]{\check{#1}}
\nc{\free}[1]{\bar{#1}} \nc{\den}[1]{\check{#1}} \nc{\lrpa}{\wr}
\nc{\curlyl}{\left \{ \begin{array}{c} {} \\ {} \end{array}
    \right .  \!\!\!\!\!\!\!}
\nc{\curlyr}{ \!\!\!\!\!\!\!
    \left . \begin{array}{c} {} \\ {} \end{array}
    \right \} }
\nc{\leaf}{\ell}       
\nc{\longmid}{\left | \begin{array}{c} {} \\ {} \end{array}
    \right . \!\!\!\!\!\!\!}
\nc{\ot}{\otimes} \nc{\sot}{{\scriptstyle{\ot}}}
\nc{\otm}{\overline{\ot}}
\nc{\ora}[1]{\stackrel{#1}{\rar}}
\nc{\ola}[1]{\stackrel{#1}{\la}}
\nc{\pltree}{\calt^\pl}
\nc{\epltree}{\calt^{\pl,\NC}}
\nc{\rbpltree}{\calt^r}
\nc{\scs}[1]{\scriptstyle{#1}} \nc{\mrm}[1]{{\rm #1}}
\nc{\dirlim}{\displaystyle{\lim_{\longrightarrow}}\,}
\nc{\invlim}{\displaystyle{\lim_{\longleftarrow}}\,}
\nc{\mvp}{\vspace{0.5cm}} \nc{\svp}{\vspace{2cm}}
\nc{\vp}{\vspace{8cm}} \nc{\proofbegin}{\noindent{\bf Proof: }}
\nc{\proofend}{$\blacksquare$ \vspace{0.5cm}}
\nc{\freerbpl}{{F^{\mathrm RBPL}}}
\nc{\sha}{{\mbox{\cyr X}}}  
\nc{\ncsha}{{\mbox{\cyr X}^{\mathrm NC}}} \nc{\ncshao}{{\mbox{\cyr
X}^{\mathrm NC,\,0}}}
\nc{\shpr}{\diamond}    
\nc{\shprm}{\overline{\diamond}}    
\nc{\shpro}{\diamond^0}    
\nc{\shprr}{\diamond^r}     
\nc{\shpra}{\overline{\diamond}^r}
\nc{\shpru}{\check{\diamond}} \nc{\catpr}{\diamond_l}
\nc{\rcatpr}{\diamond_r} \nc{\lapr}{\diamond_a}
\nc{\sqcupm}{\ot}
\nc{\lepr}{\diamond_e} \nc{\vep}{\varepsilon} \nc{\labs}{\mid\!}
\nc{\rabs}{\!\mid} \nc{\hsha}{\widehat{\sha}}
\nc{\lsha}{\stackrel{\leftarrow}{\sha}}
\nc{\rsha}{\stackrel{\rightarrow}{\sha}} \nc{\lc}{\lfloor}
\nc{\rc}{\rfloor}
\nc{\tpr}{\sqcup}
\nc{\nctpr}{\vee}
\nc{\plpr}{\star}
\nc{\rbplpr}{\bar{\plpr}}
\nc{\sqmon}[1]{\langle #1\rangle}
\nc{\forest}{\calf}
\nc{\altx}{\Lambda_X} \nc{\vecT}{\vec{T}} \nc{\onetree}{\bullet}
\nc{\Ao}{\check{A}}
\nc{\seta}{\underline{\Ao}}
\nc{\deltaa}{\overline{\delta}}
\nc{\trho}{\tilde{\rho}}

\nc{\rpr}{\circ}
\nc{\dpr}{{\tiny\diamond}}
\nc{\rprpm}{{\rpr}}

\nc{\mmbox}[1]{\mbox{\ #1\ }} \nc{\ann}{\mrm{ann}}
\nc{\Aut}{\mrm{Aut}} \nc{\can}{\mrm{can}}
\nc{\twoalg}{{two-sided algebra}\xspace}
\nc{\colim}{\mrm{colim}}
\nc{\Cont}{\mrm{Cont}} \nc{\rchar}{\mrm{char}}
\nc{\cok}{\mrm{coker}} \nc{\dtf}{{R-{\rm tf}}} \nc{\dtor}{{R-{\rm
tor}}}
\renewcommand{\det}{\mrm{det}}
\nc{\depth}{{\mrm d}}
\nc{\Div}{{\mrm Div}} \nc{\End}{\mrm{End}} \nc{\Ext}{\mrm{Ext}}
\nc{\Fil}{\mrm{Fil}} \nc{\Frob}{\mrm{Frob}} \nc{\Gal}{\mrm{Gal}}
\nc{\GL}{\mrm{GL}} \nc{\Hom}{\mrm{Hom}} \nc{\hsr}{\mrm{H}}
\nc{\hpol}{\mrm{HP}} \nc{\id}{\mrm{id}} \nc{\im}{\mrm{im}}
\nc{\incl}{\mrm{incl}} \nc{\length}{\mrm{length}}
\nc{\LR}{\mrm{LR}} \nc{\mchar}{\rm char} \nc{\NC}{\mrm{NC}}
\nc{\mpart}{\mrm{part}} \nc{\pl}{\mrm{PL}}
\nc{\ql}{{\QQ_\ell}} \nc{\qp}{{\QQ_p}}
\nc{\rank}{\mrm{rank}} \nc{\rba}{\rm{RBA }} \nc{\rbas}{\rm{RBAs }}
\nc{\rbpl}{\mrm{RBPL}}
\nc{\rbw}{\rm{RBW }} \nc{\rbws}{\rm{RBWs }} \nc{\rcot}{\mrm{cot}}
\nc{\rest}{\rm{controlled}\xspace}
\nc{\rdef}{\mrm{def}} \nc{\rdiv}{{\rm div}} \nc{\rtf}{{\rm tf}}
\nc{\rtor}{{\rm tor}} \nc{\res}{\mrm{res}} \nc{\SL}{\mrm{SL}}
\nc{\Spec}{\mrm{Spec}} \nc{\tor}{\mrm{tor}} \nc{\Tr}{\mrm{Tr}}
\nc{\mtr}{\mrm{sk}}

\nc{\ab}{\mathbf{Ab}} \nc{\Alg}{\mathbf{Alg}}
\nc{\Algo}{\mathbf{Alg}^0} \nc{\Bax}{\mathbf{Bax}}
\nc{\Baxo}{\mathbf{Bax}^0} \nc{\RB}{\mathbf{RB}}
\nc{\RBo}{\mathbf{RB}^0} \nc{\BRB}{\mathbf{RB}}
\nc{\Dend}{\mathbf{DD}} \nc{\bfk}{{\bf k}} \nc{\bfone}{{\bf 1}}
\nc{\base}[1]{{a_{#1}}} \nc{\detail}{\marginpar{\bf More detail}
    \noindent{\bf Need more detail!}
    \svp}
\nc{\Diff}{\mathbf{Diff}} \nc{\gap}{\marginpar{\bf
Incomplete}\noindent{\bf Incomplete!!}
    \svp}
\nc{\FMod}{\mathbf{FMod}} \nc{\mset}{\mathbf{MSet}}
\nc{\rb}{\mathrm{RB}} \nc{\Int}{\mathbf{Int}}
\nc{\Mon}{\mathbf{Mon}}
\nc{\remarks}{\noindent{\bf Remarks: }}
\nc{\OS}{\mathbf{OS}} 
\nc{\Rep}{\mathbf{Rep}}
\nc{\Rings}{\mathbf{Rings}} \nc{\Sets}{\mathbf{Sets}}
\nc{\DT}{\mathbf{DT}}

\nc{\BA}{{\mathbb A}} \nc{\CC}{{\mathbb C}} \nc{\DD}{{\mathbb D}}
\nc{\EE}{{\mathbb E}} \nc{\FF}{{\mathbb F}} \nc{\GG}{{\mathbb G}}
\nc{\HH}{{\mathbb H}} \nc{\LL}{{\mathbb L}} \nc{\NN}{{\mathbb N}}
\nc{\QQ}{{\mathbb Q}} \nc{\RR}{{\mathbb R}} \nc{\BS}{{\mathbb{S}}} \nc{\TT}{{\mathbb T}}
\nc{\VV}{{\mathbb V}} \nc{\ZZ}{{\mathbb Z}}


\nc{\calao}{{\mathcal A}} \nc{\cala}{{\mathcal A}}
\nc{\calc}{{\mathcal C}} \nc{\cald}{{\mathcal D}}
\nc{\cale}{{\mathcal E}} \nc{\calf}{{\mathcal F}}
\nc{\calfr}{{{\mathcal F}^{\,r}}} \nc{\calfo}{{\mathcal F}^0}
\nc{\calfro}{{\mathcal F}^{\,r,0}} \nc{\oF}{\overline{F}}
\nc{\calg}{{\mathcal G}} \nc{\calh}{{\mathcal H}}
\nc{\cali}{{\mathcal I}} \nc{\calj}{{\mathcal J}}
\nc{\call}{{\mathcal L}} \nc{\calm}{{\mathcal M}}
\nc{\caln}{{\mathcal N}} \nc{\calo}{{\mathcal O}}
\nc{\calp}{{\mathcal P}} \nc{\calq}{{\mathcal Q}} \nc{\calr}{{\mathcal R}}
\nc{\calt}{{\mathcal T}} \nc{\caltr}{{\mathcal T}^{\,r}}
\nc{\calu}{{\mathcal U}} \nc{\calv}{{\mathcal V}}
\nc{\calw}{{\mathcal W}} \nc{\calx}{{\mathcal X}}
\nc{\CA}{\mathcal{A}}

\nc{\fraka}{{\mathfrak a}} \nc{\frakB}{{\mathfrak B}}
\nc{\frakb}{{\mathfrak b}} \nc{\frakd}{{\mathfrak d}}
\nc{\oD}{\overline{D}}
\nc{\frakF}{{\mathfrak F}} \nc{\frakg}{{\mathfrak g}}
\nc{\frakm}{{\mathfrak m}} \nc{\frakM}{{\mathfrak M}}
\nc{\frakMo}{{\mathfrak M}^0} \nc{\frakp}{{\mathfrak p}}
\nc{\frakS}{{\mathfrak S}} \nc{\frakSo}{{\mathfrak S}^0}
\nc{\fraks}{{\mathfrak s}} \nc{\os}{\overline{\fraks}}
\nc{\frakT}{{\mathfrak T}}
\nc{\oT}{\overline{T}}
\nc{\frakX}{{\mathfrak X}} \nc{\frakXo}{{\mathfrak X}^0}
\nc{\frakx}{{\mathbf x}}
\nc{\frakTx}{\frakT}      
\nc{\frakTa}{\frakT^a}        
\nc{\frakTxo}{\frakTx^0}   
\nc{\caltao}{\calt^{a,0}}   
\nc{\ox}{\overline{\frakx}} \nc{\fraky}{{\mathfrak y}}
\nc{\frakz}{{\mathfrak z}} \nc{\oX}{\overline{X}}

\font\cyr=wncyr10

\nc{\redtext}[1]{\textcolor{red}{#1}}


\title{Classifying complements for conformal algebras}

\author{Yanyong Hong}
\address{Department of Mathematics, Hangzhou Normal University,
Hangzhou, 311121, P.R.China}
\email{hongyanyong2008@yahoo.com}
\thanks{This work was supported by the Zhejiang Provincial Natural Science Foundation of China (No. LY20A010022), the National Natural Science Foundation of China (No. 11871421) and the Scientific Research Foundation of Hangzhou Normal University (No. 2019QDL012)}
\subjclass[2010]{17B05, 17B65, 17B69}
\keywords{Lie conformal algebra, associative conformal algebra, bicrossed product, classifying complement}

\begin{abstract}
 Let $R\subseteq E$ be two Lie conformal algebras and $Q$ be a given complement of $R$ in $E$. Classifying complements problem asks for describing and classifying all complements of $R$ in $E$ up to an isomorphism. It is known that $E$ is isomorphic to a bicrossed product of $R$ and $Q$. We show that any complement of $R$ in $E$ is isomorphic to a deformation of $Q$ associated to the bicrossed product.
A classifying object is constructed to parameterize all $R$-complements of $E$. Several explicit examples are provided. Similarly, we also develop a classifying complements theory of associative conformal algebras.
\end{abstract}

\maketitle

\section{Introduction}
The notion of a Lie conformal algebra was introduced in \cite{K1, K2} providing an axiomatic description of
properties of the operator product expansion in conformal field theory. There are many other fields closely related to Lie conformal algebras such as vertex algebras, infinite-dimensional Lie algebras satisfying the locality property (\cite{K}) and Hamiltonian formalism in the theory of nonlinear
evolution equations (see \cite{BDK}). In particular, associative conformal algebras naturally appear in the representation theory of Lie conformal algebras.
Structure theory and representation theory of Lie conformal algebras and associative conformal algebras have been studied in a series of papers (see \cite{BKV}-\cite{DK1}, \cite{Ko1}-\cite{Ko2}, \cite{Z1}-\cite{Z2} and so on). In particular, the $\mathbb{C}[\partial]$-split extending structure problems of Lie conformal algebras and associative conformal algebras were investigated in \cite{HS} and \cite{H1} respectively. The tool for studying $\mathbb{C}[\partial]$-split extending structure problem is unified product, which includes crossed product, bicrossed product and so on. Note that bicrossed product is related to the $\mathbb{C}[\partial]$-split factorization problem, which asks for the description and classification of all Lie (or associative) conformal algebraic structures on the direct sum $E=R\oplus Q$ of $\mathbb{C}[\partial]$-modules, where $R$ and $Q$ are two given Lie (or associative) conformal algebras.

In this paper, following the works in \cite{HS} and \cite{H1}, we plan to study the following structure problem of Lie conformal algebras and associative conformal algebras:

{\bf Classifying complements problem}: Let $R\subseteq E$ be two Lie (or associative) conformal algebras. If
there exists an $R$-complement $Q$ of $E$, describe and classify up to an isomorphism all $R$-complements of $E$, i.e. all subalgebras $T$ of $E$, such that $E=R\oplus T$ as $\mathbb{C}[\partial]$-modules.\\
From the point of view of algebra, this problem is natural and significant. Similar problems for classical algebraic objects such as associative algebras, Hopf algebras, Lie algebras,  Leibniz algebras and left-symmetric algebras have been investigated in \cite{A, AM1, AM2, H1}.  According to the theory of bicrossed product developed in \cite{HS} and \cite{H1}, it is easy to see that $E$ in the classifying complements problem is just a bicrossed product of $R$ and $Q$ associated to a matched pair. Therefore, we can use bicrossed product theories to study  classifying complements problems of Lie conformal algebras and associative conformal algebras. It is shown that any complement $T$ of $R$ in $E$ is isomorphic to a deformation of $Q$ associated to the matched pair of the bicrossed product of $R$ and $Q$. Then a cohomological type object is constructed to parameterize all $R$-complements of $E$. Several explicit examples are also provided. In particular, in the case of Lie conformal algebras, it is shown that there exists finite Lie conformal algebras $R\subseteq E$ such that the number of isomorphism classes of complements of $R$ in $E$ is infinite. In addition, it should be pointed out that the method in this paper can be applied to other conformal algebras with a theory of bicrossed product such as left-symmetric conformal algebras and Leibniz conformal algebras.

This article is organized as follows. In Section 2, we introduce some
basic definitions of Lie conformal algebras, associative conformal algebras and complements. In Section 3, using bicrossed product construction, classifying complements problem of Lie conformal algebras is answered by a new cohomological type object which is associated to deformations of $Q$ related with the matched pair of bicrossed product of $R$ and $Q$. Several explicit examples are also provided. Section 4 is devoted to developing classifying complements theory of associative conformal algebras with a similar method in Section 3. 

Throughout this paper, denote by $\mathbb{C}$ the field of complex
numbers. All tensors over $\mathbb{C}$ are denoted by $\otimes$.
Moreover, if $A$ is a vector space, the space of polynomials of $\lambda$ with coefficients in $A$ is denoted by $A[\lambda]$.
\section{Preliminaries on  conformal algebras and complements}

In this section, we recall some basic definitions of Lie conformal algebras, associative conformal algebras and complements.

\begin{defi}\label{def1}{\rm
A {\bf conformal algebra} $R$ is a $\mathbb{C}[\partial]$-module
endowed with a $\mathbb{C}$-bilinear map $R\times R\rightarrow
 R[\lambda]$ denoted by $a\times b\rightarrow
a_{\lambda} b$ satisfying
\begin{eqnarray}
\partial a_{\lambda}b=-\lambda a_{\lambda}b,  \quad
a_{\lambda}\partial b=(\partial+\lambda)a_{\lambda}b.\end{eqnarray}

A {\bf Lie conformal algebra} $R$ is a conformal algebra with the $\mathbb{C}$-bilinear
map $[\cdot_\lambda \cdot]: R\times R\rightarrow  R[\lambda]$ satisfying
\begin{eqnarray*}
&&[a_\lambda b]=-[b_{-\lambda-\partial}a],~~~~\text{(skew-symmetry)}\\
&&[a_\lambda[b_\mu c]]=[[a_\lambda b]_{\lambda+\mu} c]+[b_\mu[a_\lambda c]],~~~~~~\text{(Jacobi identity)}
\end{eqnarray*}
for $a$, $b$, $c\in R$.

An {\bf associative conformal algebra} $R$ is a conformal algebra with the $\mathbb{C}$-bilinear
map $\cdot_\lambda \cdot: R\times R\rightarrow R[\lambda]$ satisfying
\begin{eqnarray}
(a_{\lambda}b)_{\lambda+\mu}c=a_{\lambda}(b_\mu
c),
\end{eqnarray}
for $a$, $b$, $c\in R$.}
\end{defi}

A  conformal algebra
is called {\bf finite} if it is finitely generated as a
$\mathbb{C}[\partial]$-module. The {\bf rank} of a conformal
algebra $R$ is its rank as a $\mathbb{C}[\partial]$-module.

\begin{ex}
The Virasoro Lie conformal algebra $\text{Vir}$ is the simplest nontrivial
example of Lie conformal algebras of rank one. It is defined by
$$\text{Vir}=\mathbb{C}[\partial]L, ~~[L_\lambda L]=(\partial+2\lambda)L.$$
\end{ex}

\begin{ex}
Let $\mathfrak{g}$ be a Lie (or associative) algebra. The current Lie (or associative) conformal
algebra associated to $\mathfrak{g}$ is defined by:
$$\text{Cur} \mathfrak{g}=\mathbb{C}[\partial]\otimes \mathfrak{g}, ~~[a_\lambda b]=[a,b] (\text{or $a_\lambda b=ab$}),
~~a,b\in \mathfrak{g}.$$
\end{ex}
Next, we introduce the definition of complement of conformal algebras.
\begin{defi}
Let $R\subseteq E$ be two conformal algebras. Another subalgebra $Q$ of $E$ is called an \emph{$R$-complement} of $E$, if $E=R\oplus Q$ as $\mathbb{C}[\partial]$-modules.
\end{defi}

\begin{defi}
Let $R\subseteq E$ be two conformal algebras. Denote by $\mathcal{F}(R,E)$ the isomorphism classes of $R$-complements of $E$. Define the \emph{factorization index} of $R$ in $E$ as
$[E: R]: =\mid \mathcal{F}(R,E)\mid$.
\end{defi}

Therefore, in the sequel, for investigating classifying complements problems, we only need to describe and compute $\mathcal{F}(R,E)$.

\section{Classifying complements for Lie conformal algebras}
In this section, we investigate classifying complements problem for Lie conformal algebras.

First, we introduce some definitions associated to classifying complements problem for Lie conformal algebras.
\begin{defi}
A \emph{left module} $M$ over a Lie conformal algebra $R$ is a $\mathbb{C}[\partial]$-module endowed with a $\mathbb{C}$-bilinear map
$R\times M\longrightarrow M[\lambda]$, $(a, v)\mapsto a_\lambda v$, satisfying the following axioms $(a, b\in R, v\in M)$:\\
(LM1)$\qquad\qquad (\partial a)_\lambda v=-\lambda a_\lambda v,~~~a_\lambda(\partial v)=(\partial+\lambda)a_\lambda v,$\\
(LM2)$\qquad\qquad [a_\lambda b]_{\lambda+\mu}v=a_\lambda(b_\mu v)-b_\mu(a_\lambda v).$
\end{defi}

Similarly, we can define  right $R$-module.
\begin{defi}
A \emph{right module} $M$ over a Lie conformal algebra $R$ is a $\mathbb{C}[\partial]$-module endowed with a $\mathbb{C}$-bilinear map
$M\times R\longrightarrow M[\lambda]$, $(v, a)\mapsto v_\lambda a$, satisfying the following axioms $(a, b\in R, v\in M)$:\\
(RM1)$\qquad\qquad(\partial v)_\lambda a=-\lambda v_\lambda a,~~v_\lambda (\partial a)=(\partial+\lambda)v_\lambda a,$\\
(RM2)$\qquad\qquad v_\mu[a_\lambda b]=(v_\mu a)_{\lambda+\mu}b-(v_\mu b)_{-\lambda-\partial}a.$
\end{defi}
Any right $R$-module is a left $R$-module via $a_\lambda v:=-v_{-\lambda-\partial}a$ and viceversa.

\begin{defi}
A \emph{matched pair} of Lie conformal algebras is a quadruple $(R,Q,\lhd_\lambda,\rhd_\lambda)$,
where $R$ and $Q$ are two Lie conformal algebras, $R$ is a left module of $Q$ under $\rhd_\lambda: Q\otimes R\rightarrow R$, $Q$ is a right module of $R$ under $\lhd_\lambda: Q\otimes R\rightarrow Q$ and they satisfy
\begin{eqnarray*}
(B1)&&~~x\rhd_{-\lambda-\mu-\partial}[a_\lambda b]=[(x\rhd_{-\lambda-\partial} a)_{-\mu-\partial} b]+[a_\lambda(x\rhd_{-\mu-\partial} b)]\\
&&+(x\lhd_{-\lambda-\partial} a)\rhd_{-\mu-\partial} b-(x\lhd_{-\mu-\partial} b)\rhd_{-\lambda-\partial}a,\\
(B2)&&~~\{x_\mu y\}\lhd_{-\lambda-\partial}a=\{x_\mu(y\lhd_{-\lambda-\partial}a)\}+\{(x\lhd_{-\lambda-\partial}a)_{\lambda+\mu}y\}\\
&&+x\lhd_{\mu}(y\rhd_{-\lambda-\partial}a)-y\lhd_{-\lambda-\mu-\partial}(x\rhd_{-\lambda-\partial}a),
\end{eqnarray*}
for any $a$, $b\in R$, $x$, $y\in Q$.
\end{defi}

Denote any element in $R\oplus Q$ by $a\oplus x$ for any $a\in R$ and $x\in Q$.

\begin{defi} (see Theorem 3.2 in \cite{HL})
Suppose that $(R,Q,\lhd_\lambda,\rhd_\lambda)$ is a matched pair of $R$ and $Q$. Then
the direct sum of $\mathbb{C}[\partial]$-modules  $R\oplus Q$ is a Lie conformal algebra endowed the $\lambda$-bracket as follows:
\begin{eqnarray}
[(a\oplus x)_\lambda (b\oplus y)]=([a_\lambda b]+x\rhd_\lambda b-y\rhd_{-\lambda-\partial} a)\oplus(\{x_\lambda y\}+x\lhd_\lambda b-y\lhd_{-\lambda-\partial} a),
\end{eqnarray}
for any $a$, $b\in R$, $x$, $y\in Q$. This Lie conformal algebra is called the \emph{bicrossed product} of $R$ and $Q$, and is denoted by $R\bowtie_{\lhd,\rhd}Q$.
\end{defi}

Note that in $E=R\bowtie_{\lhd,\rhd}Q$, $R$ and $Q$ are two subalgebras of $E$ and $R\cap Q=\{0\}$. Therefore, $Q$ is a complement of $R$ in $E$. Conversely, according to Proposition 5.7 in \cite{HS},
we have the following result.

\begin{pro}
Let $R$ and $E$ be two Lie conformal algebras and $R\subseteq E$. Set $Q$ be a complement of $R$ in $E$. Then $E$ is isomorphic to a bicrossed product $R\bowtie_{\lhd,\rhd}Q$ of Lie conformal algebras $R$ and $Q$
associated to a matched pair
$(R, Q, \lhd_\lambda, \rhd_\lambda)$.
\end{pro}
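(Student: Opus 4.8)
The plan is to extract the matched-pair data directly from the $\lambda$-bracket of $E$ by means of the projections attached to the decomposition $E=R\oplus Q$, and then to observe that the ambient bracket decomposes precisely into the bicrossed product $\lambda$-bracket, so that the identity map furnishes the desired isomorphism.

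First I would fix the canonical $\mathbb{C}[\partial]$-linear projections $\pi_R\colon E\to R$ and $\pi_Q\colon E\to Q$ determined by $E=R\oplus Q$, and define, for $x\in Q$ and $a\in R$,
\[
x\rhd_\lambda a:=\pi_R([x_\lambda a]),\qquad x\lhd_\lambda a:=\pi_Q([x_\lambda a]),
\]
so that $[x_\lambda a]=(x\rhd_\lambda a)+(x\lhd_\lambda a)$ in $E[\lambda]$. The decisive point is that $R$ and $Q$ are subalgebras, whence $[a_\lambda b]\in R[\lambda]$ and $\{x_\lambda y\}:=[x_\lambda y]\in Q[\lambda]$ for $a,b\in R$ and $x,y\in Q$; in particular these brackets are untouched by the projections. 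Expanding $[(a+x)_\lambda(b+y)]$ by bilinearity, rewriting the mixed term $[a_\lambda y]=-[y_{-\lambda-\partial}a]$ by skew-symmetry, and separating the $R$- and $Q$-components then reproduces exactly the defining $\lambda$-bracket of $R\bowtie_{\lhd,\rhd}Q$. Consequently, once $(R,Q,\lhd_\lambda,\rhd_\lambda)$ is shown to be a matched pair, the map $e\mapsto\pi_R(e)\oplus\pi_Q(e)$ is a $\mathbb{C}[\partial]$-linear bijection intertwining the two brackets, i.e.\ an isomorphism of Lie conformal algebras.

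It then remains to verify the matched-pair axioms, each of which descends from structure already present in $E$. The sesquilinearity conditions (LM1) for $\rhd$ and (RM1) for $\lhd$ follow by applying the $\mathbb{C}[\partial]$-linear maps $\pi_R,\pi_Q$ to the conformal relations $(\partial x)_\lambda a=-\lambda(x_\lambda a)$ and $x_\lambda(\partial a)=(\partial+\lambda)(x_\lambda a)$ holding in $E$. The substantive axioms all arise from the Jacobi identity of $E$ on mixed triples, followed by projection: applying it to $x,y\in Q$ and $a\in R$ and projecting onto $R$ yields the left-module axiom (LM2) for $\rhd$, with $[x_\lambda y]\rhd_{\lambda+\mu}a$ arising as the $R$-part of $[[x_\lambda y]_{\lambda+\mu}a]$ (valid since $[x_\lambda y]\in Q$), while projecting the same identity onto $Q$ yields the compatibility condition (B2); applying the Jacobi identity to $x\in Q$ and $a,b\in R$ and projecting onto $Q$ yields the right-module axiom (RM2) for $\lhd$, while its projection onto $R$ yields (B1).

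The conceptual content is transparent, and the only real difficulty is a cosmetic but delicate one: the conditions (B1) and (B2) are recorded in a normalized form whose spectral parameters carry shifts such as $-\lambda-\partial$ and $-\mu-\partial$. Matching the raw output of the projections to these stated forms requires repeated use of skew-symmetry in $R$ and in $Q$ together with the sesquilinearity relations in order to move $\partial$ across the $\lambda$-arguments. Thus the main obstacle is purely the bookkeeping of these variable substitutions rather than any genuinely new algebraic input.
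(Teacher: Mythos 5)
Your proof is correct, but it takes a genuinely different route from the paper, because the paper supplies no argument of its own: the proposition is justified there solely by citing Proposition 5.7 of \cite{HS}, where it arises inside the unified-product (extending structures) framework as the special case in which the complement $Q$ is itself a subalgebra, so that the cocycle part of the unified product vanishes and only the bicrossed-product actions $(\lhd_\lambda,\rhd_\lambda)$ survive. Your argument is the self-contained alternative: define $x\rhd_\lambda a:=\pi_R([x_\lambda a])$ and $x\lhd_\lambda a:=\pi_Q([x_\lambda a])$, deduce sesquilinearity from the $\mathbb{C}[\partial]$-linearity of the projections, and extract the module and compatibility axioms from the two projections of the Jacobi identity of $E$ on mixed triples. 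Your assignment of which projection yields which axiom is exactly right: for $x,y\in Q$ and $a\in R$, the $R$-projection gives the left $Q$-module axiom for $\rhd_\lambda$ and the $Q$-projection gives (B2); for $x\in Q$ and $a,b\in R$, the $Q$-projection gives the right $R$-module axiom for $\lhd_\lambda$ and the $R$-projection gives (B1). You also correctly isolate the one delicate point: the identities produced this way come out in the parameters $(\lambda,\mu)$, whereas (B1)--(B2) are stated with shifts such as $-\lambda-\partial$; the two forms are interconvertible by skew-symmetry in $R$, $Q$ and $E$ (for instance $-\{y_\mu(x\lhd_\lambda a)\}=\{(x\lhd_\lambda a)_{-\mu-\partial}y\}$) together with substitution of $\partial$-shifted parameters, which is legitimate for polynomial identities with coefficients in a $\mathbb{C}[\partial]$-module. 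The trade-off is clear: your route makes the result self-contained within this paper, while the paper's route is shorter and situates the proposition as a degenerate (cocycle-free) instance of the general theory of \cite{HS}.
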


\begin{defi}
Suppose that $(R,Q,\lhd_\lambda,\rhd_\lambda)$ is a matched pair of Lie conformal algebras.
If a $\mathbb{C}[\partial]$-module homomorphism $\varphi: Q\rightarrow R$ satisfies
\begin{eqnarray}\label{eq1}
\varphi([x_\lambda y])-[\varphi(x)_\lambda \varphi(y)]
=\varphi(y\lhd_{-\lambda-\partial}\varphi(x))-\varphi(x\lhd_\lambda \varphi(y))
+x\rhd_\lambda\varphi(y)-y\rhd_{-\lambda-\partial}\varphi(x),
\end{eqnarray}
for any $x$, $y\in Q$, then $\varphi$ is called a \emph{deformation map} of  $(R,Q,\lhd_\lambda,\rhd_\lambda)$.
\end{defi}

\begin{rmk}
Suppose that $Q$ is an ideal of $E$. Then $\rhd_\lambda$ is trivial in the associated matched pair. We simply denote this matched pair by $(R,Q,\lhd_\lambda)$. In this case, any deformation map $\varphi$ satisfies
\begin{eqnarray}\label{eq2}
\varphi([x_\lambda y])-[\varphi(x)_\lambda \varphi(y)]
=\varphi(y\lhd_{-\lambda-\partial}\varphi(x))-\varphi(x\lhd_\lambda \varphi(y)),~~~\text{for any $x$, $y\in Q$}.
\end{eqnarray}

Note that if $\lhd_\lambda$ and $\rhd_\lambda$ are trivial in the matched pair $(R,Q,\lhd_\lambda,\rhd_\lambda)$, all deformation maps are homomorphisms of Lie conformal algebra.
\end{rmk}
We also denote the set of all deformation maps of $(R,Q,\lhd_\lambda,\rhd_\lambda)$ by
$\mathcal{DM}(Q,R\mid(\lhd_\lambda,\rhd_\lambda))$.

\begin{lem}\label{llem1}
Let $R$ and $E$ be two Lie conformal algebras, $R\subset E$, and $Q$ be a given complement of $R$ in $E$. Suppose $\varphi: Q\rightarrow R$ is a deformation map of the matched pair $(R,Q,\lhd_\lambda,\rhd_\lambda)$.\\
(1) Set $f_\varphi: Q\rightarrow E=R\oplus Q$ be a $\mathbb{C}[\partial]$-module homomorphism defined as follows:
\begin{eqnarray*}
f_\varphi(x)=\varphi(x)\oplus x,~~~\text{for any $x\in Q$.}
\end{eqnarray*}
Then $\overline{Q}=\text{Im}(f_\varphi)
$ is a complement of $R$ in $E$.\\
(2) Set $Q_\varphi=Q$ as a $\mathbb{C}[\partial]$-module. Then
$Q_\varphi$ is a Lie conformal algebra with the following $\lambda$-bracket
\begin{eqnarray}
[x_\lambda y]_\varphi:=[x_\lambda y]+x\lhd_\lambda \varphi(y)-y\lhd_{-\lambda-\partial}\varphi(x),~~~\text{for any $x$, $y\in Q$.}
\end{eqnarray}
$Q_\varphi$ is called the \emph{$\varphi$-deformation} of $Q$. Moreover, as Lie conformal algebras, $Q_\varphi\cong \overline{Q}$.
\end{lem}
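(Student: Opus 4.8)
The plan is to verify the two assertions in turn, treating (1) as essentially structural and concentrating the real work on the isomorphism claim in (2). For (1), since $f_\varphi$ sends $x$ to $\varphi(x)\oplus x$ and $\varphi$ is a $\mathbb{C}[\partial]$-module homomorphism, $f_\varphi$ is an injective $\mathbb{C}[\partial]$-module map, so $\overline{Q}=\mathrm{Im}(f_\varphi)$ is a $\mathbb{C}[\partial]$-submodule of $E$ isomorphic to $Q$ as a module. To see $E=R\oplus\overline{Q}$ as $\mathbb{C}[\partial]$-modules, I would argue that any $a\oplus x\in E$ can be written uniquely as $(a-\varphi(x))\oplus 0 + \varphi(x)\oplus x$, where the first summand lies in $R$ and the second in $\overline{Q}$; uniqueness follows because $R\cap\overline{Q}=\{0\}$ (an element $\varphi(x)\oplus x$ lies in $R=R\oplus 0$ only if $x=0$). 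What still requires checking is that $\overline{Q}$ is closed under the $\lambda$-bracket of $E$, i.e.\ that it is a genuine subalgebra and not merely a complementary submodule; this is exactly where the deformation-map condition \eqref{eq1} enters.

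For the closure, I would compute $[f_\varphi(x)_\lambda f_\varphi(y)]$ directly using the bicrossed product bracket (3), with $a=\varphi(x)$, $b=\varphi(y)$. Expanding the $R$-component gives $[\varphi(x)_\lambda\varphi(y)]+x\rhd_\lambda\varphi(y)-y\rhd_{-\lambda-\partial}\varphi(x)$, and the $Q$-component gives $\{x_\lambda y\}+x\lhd_\lambda\varphi(y)-y\lhd_{-\lambda-\partial}\varphi(x)$. The element $\overline{Q}$ is closed precisely when the resulting bracket again has the form $\varphi(z)\oplus z$ for the $Q$-component $z$, i.e.\ when the $R$-component equals $\varphi$ applied to the $Q$-component. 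Setting this up as the requirement that the $R$-component equal $\varphi(\{x_\lambda y\}+x\lhd_\lambda\varphi(y)-y\lhd_{-\lambda-\partial}\varphi(x))$ and rearranging reproduces exactly \eqref{eq1}. Thus closure is not just implied by the deformation condition but is equivalent to it, and this computation simultaneously identifies the induced bracket on $\overline{Q}$.

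For (2), the same computation does the bulk of the work. The bracket $[x_\lambda y]_\varphi=[x_\lambda y]+x\lhd_\lambda\varphi(y)-y\lhd_{-\lambda-\partial}\varphi(x)$ is precisely the $Q$-component $z$ appearing above. I would then define $f_\varphi\colon Q_\varphi\to\overline{Q}$ and check it is a homomorphism of Lie conformal algebras: on the one hand $f_\varphi([x_\lambda y]_\varphi)=\varphi([x_\lambda y]_\varphi)\oplus[x_\lambda y]_\varphi$, and on the other hand $[f_\varphi(x)_\lambda f_\varphi(y)]$ was just shown to equal $\varphi([x_\lambda y]_\varphi)\oplus[x_\lambda y]_\varphi$ thanks to \eqref{eq1}. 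Since $f_\varphi$ is already a $\mathbb{C}[\partial]$-module isomorphism onto its image $\overline{Q}$ and it respects the bracket, it is an isomorphism of Lie conformal algebras $Q_\varphi\cong\overline{Q}$. I would also note in passing that $Q_\varphi$ is a bona fide Lie conformal algebra; rather than verifying skew-symmetry and the Jacobi identity by hand, I would transport them across the isomorphism, since $\overline{Q}$ inherits these from $E$.

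The main obstacle is the bicrossed-product bracket computation in the middle step: keeping track of the four-term $\lambda$-dependences and the sign conventions (the substitutions $-\lambda-\partial$ appearing in the skew terms) so that the rearrangement matches \eqref{eq1} on the nose. Everything else—module-theoretic directness of the sum, injectivity, and transporting the Lie axioms—is routine once that identity is established. The conceptual point to emphasize is that condition \eqref{eq1} is engineered exactly so that the graph of $\varphi$ is a subalgebra, which is why the deformation map governs both parts of the lemma.
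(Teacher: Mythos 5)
Your proposal is correct and follows essentially the same route as the paper's proof: both compute the bicrossed-product bracket of $\varphi(x)\oplus x$ and $\varphi(y)\oplus y$, use the deformation-map identity \eqref{eq1} to recognize the $R$-component as $\varphi$ applied to the $Q$-component (giving closure of $\overline{Q}$), and then prove $Q_\varphi\cong\overline{Q}$ via the graph map $x\mapsto\varphi(x)\oplus x$. The points you spell out beyond the paper --- the explicit direct-sum decomposition $a\oplus x=(a-\varphi(x))\oplus 0+(\varphi(x)\oplus x)$, the observation that closure is in fact equivalent to \eqref{eq1}, and the transport of the Lie conformal algebra axioms from $\overline{Q}$ to $Q_\varphi$ --- are details the paper treats as obvious or leaves implicit, not deviations in method.
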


\begin{proof}
(1) Obviously, $E=\overline{Q} \oplus R$ where the sum is the direct sum of $\mathbb{C}[\partial]$-modules. Therefore, we only need to prove that $\overline{Q}$ is a Lie conformal subalgebra. For any $x$, $y\in Q$,
\begin{eqnarray*}
&&[(\varphi(x)\oplus x)_\lambda (\varphi(y)\oplus y)]\\
&=&([\varphi(x)_\lambda \varphi(y)]+x\rhd_\lambda \varphi(y)
-y\rhd_{-\lambda-\partial}\varphi(x))\oplus([x_\lambda y]+x\lhd_\lambda \varphi(y)-y\lhd_{-\lambda-\partial}\varphi(x))\\
&=&(\varphi([x_\lambda y]+x\lhd_\lambda \varphi(y)-y\lhd_{-\lambda-\partial}\varphi(x)))\oplus
([x_\lambda y]+x\lhd_\lambda \varphi(y)-y\lhd_{-\lambda-\partial}\varphi(x)).
\end{eqnarray*}
Therefore, $\overline{Q}$ is a Lie conformal subalgebra of $E$. Then we finish the proof.

(2) Define a $\mathbb{C}[\partial]$-module homomorphism $g_\varphi: Q_\varphi\rightarrow \overline{Q}$
as $g_\varphi(x)=\varphi(x)\oplus x$ for any $x\in Q_\varphi$. Obviously, it is a
$\mathbb{C}[\partial]$-isomorphism. For proving (2), it is enough to show that $g_\varphi$ is a  Lie conformal algebra homomorphism. For any $x$, $y\in Q$,
\begin{eqnarray*}
g_\varphi([x_\lambda y]_\varphi)&=&g_\varphi([x_\lambda y]+x\lhd_\lambda \varphi(y)-y\lhd_{-\lambda-\partial}\varphi(x))\\
&=&(\varphi([x_\lambda y])+\varphi(x\lhd_\lambda \varphi(y))-\varphi(y\lhd_{-\lambda-\partial}\varphi(x)))\\
&&\oplus( [x_\lambda y]+x\lhd_\lambda \varphi(y)-y\lhd_{-\lambda-\partial}\varphi(x))\\
&=&([\varphi(x)_\lambda \varphi(y)]+x\rhd_\lambda \varphi(y)-y\rhd_{-\lambda-\partial} \varphi(x))\\
&&\oplus([x_\lambda y]+x\lhd_\lambda \varphi(y)-y\lhd_{-\lambda-\partial}\varphi(x))\\
&=&[(\varphi(x)\oplus x)_\lambda (\varphi(y)\oplus y)]\\
&=&[g_\varphi(x)_\lambda g_\varphi(y)].
\end{eqnarray*}
Therefore, $g_\varphi$ is a Lie conformal algebra homomorphism and the result holds.
\end{proof}
\begin{rmk}
Note that when $Q$ is an ideal of $E$, by (\ref{eq2}) , the deformation map $\varphi: Q\rightarrow R$ satisfies  $\varphi([x_\lambda y]_\varphi)=[\varphi(x)_\lambda \varphi(y)]$ for any $x$, $y\in Q$.
\end{rmk}
\begin{thm}\label{t1}
Let $R$ and $E$ be two Lie conformal algebras, $R\subseteq E$, $Q$ be a complement of $R$ in $E$
associated with the matched pair of Lie conformal algebras $(R, Q, \lhd_\lambda, \rhd_\lambda)$.
Then $\overline{Q}$ is a complement of $R$ in $E$ if and only if there exists a deformation map
$\varphi: Q\rightarrow R$ such that $\overline{Q}\cong Q_\varphi$.
\end{thm}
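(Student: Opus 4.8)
The plan is to prove the two implications separately, with the substantive content lying in the forward (``only if'') direction; the backward direction is essentially a restatement of Lemma~\ref{llem1}. For the backward direction, suppose $\varphi\colon Q\to R$ is a deformation map with $\overline{Q}\cong Q_\varphi$. By Lemma~\ref{llem1}(1) the $\mathbb{C}[\partial]$-submodule $\mathrm{Im}(f_\varphi)=\{\varphi(x)\oplus x : x\in Q\}$ is a complement of $R$ in $E$, and by Lemma~\ref{llem1}(2) it is isomorphic to $Q_\varphi$ as a Lie conformal algebra. Hence $\overline{Q}\cong Q_\varphi\cong \mathrm{Im}(f_\varphi)$ is (up to isomorphism) a genuine complement, which is exactly what this direction asserts.

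For the forward direction, I would start from an arbitrary complement $\overline{Q}$, so that $E=R\oplus Q=R\oplus\overline{Q}$ as $\mathbb{C}[\partial]$-modules, and reconstruct the deformation map from the two decompositions. Let $\pi_R\colon E\to R$ and $\pi_Q\colon E\to Q$ be the $\mathbb{C}[\partial]$-linear projections attached to the first decomposition. The key elementary observation is that the restriction $\pi_Q|_{\overline{Q}}\colon \overline{Q}\to Q$ is a $\mathbb{C}[\partial]$-module isomorphism: it is injective because $\ker\pi_Q\cap\overline{Q}=R\cap\overline{Q}=\{0\}$, and surjective because any $x\in Q$, written in the second decomposition as $x=r+\bar q$ with $r\in R$, $\bar q\in\overline{Q}$, satisfies $\pi_Q(\bar q)=x$. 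Inverting it gives a $\mathbb{C}[\partial]$-linear section $\psi\colon Q\to\overline{Q}\subseteq E$ with $\pi_Q\circ\psi=\mathrm{id}_Q$; setting $\varphi:=\pi_R\circ\psi\colon Q\to R$ we obtain $\psi(x)=\varphi(x)\oplus x$, so that $\overline{Q}=\mathrm{Im}(\psi)=\mathrm{Im}(f_\varphi)$.

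It then remains to check that this $\varphi$ is a deformation map. Here I would use that $\overline{Q}$ is a \emph{subalgebra}: for $x,y\in Q$ the bracket $[(\varphi(x)\oplus x)_\lambda(\varphi(y)\oplus y)]$ again lies in $\overline{Q}=\mathrm{Im}(f_\varphi)$, which means precisely that its $R$-component equals $\varphi$ applied to its $Q$-component. Expanding the bracket with the defining formula of the bicrossed product $R\bowtie_{\lhd,\rhd}Q$ and equating the two components yields
\[
[\varphi(x)_\lambda\varphi(y)]+x\rhd_\lambda\varphi(y)-y\rhd_{-\lambda-\partial}\varphi(x)
=\varphi\bigl([x_\lambda y]+x\lhd_\lambda\varphi(y)-y\lhd_{-\lambda-\partial}\varphi(x)\bigr),
\]
which, after rearranging and using $\mathbb{C}[\partial]$-linearity of $\varphi$, is exactly the deformation-map identity \eqref{eq1}. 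Finally, Lemma~\ref{llem1}(2) applied to this $\varphi$ gives $\overline{Q}=\mathrm{Im}(f_\varphi)\cong Q_\varphi$, completing the direction.

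The main obstacle I expect is not the construction of $\varphi$ — which is forced by the two complementary decompositions — but the verification that closure of $\overline{Q}$ under the $\lambda$-bracket is equivalent to identity~\eqref{eq1}; the bookkeeping of separating the $R$- and $Q$-components of the bicrossed bracket and recognizing the rearranged equation as \eqref{eq1} is the delicate step. One should also be slightly careful that all the maps involved ($\pi_R$, $\pi_Q$, $\psi$, $\varphi$) are genuinely $\mathbb{C}[\partial]$-linear, so that $\varphi$ qualifies as a $\mathbb{C}[\partial]$-module homomorphism and $Q_\varphi$ is a bona fide Lie conformal algebra via Lemma~\ref{llem1}.
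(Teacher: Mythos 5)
Your proposal is correct and takes essentially the same approach as the paper: in the forward direction you build the $\mathbb{C}[\partial]$-module isomorphism $Q\to\overline{Q}$ from the two decompositions $E=R\oplus Q=R\oplus\overline{Q}$ (your $\psi=(\pi_Q|_{\overline{Q}})^{-1}$ and $\varphi=\pi_R\circ\psi$ are exactly the paper's $v$ and $-u$), read off the deformation-map identity \eqref{eq1} from closure of $\overline{Q}=\mathrm{Im}(f_\varphi)$ under the bicrossed-product bracket, and invoke Lemma~\ref{llem1}, while the backward direction is Lemma~\ref{llem1} itself. Your write-up is in fact slightly more explicit than the paper's, which disposes of the closure computation as ``a similar computation as that in (1) of Lemma~\ref{llem1}.''
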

\begin{proof}
Suppose that $\overline{Q}$ is a complement of $R$ in $E$.
Then $E=R\oplus Q=R\oplus\overline{Q}$ as $\mathbb{C}[\partial]$-modules.
For any $x\in Q$, set $x=u(x)\oplus v(x)$ where $u: Q\rightarrow R$ and
$v: Q\rightarrow \overline{Q}$ are $\mathbb{C}[\partial]$-module homomorphisms.
Since the sum of $E=R\oplus\overline{Q}$ is the direct sum as $\mathbb{C}[\partial]$-modules,
it is easy to see that $v$ is injective. Similarly,
for any $y\in \overline{Q}$, set $y=f(y)\oplus g(y)$ where $f: \overline{Q}\rightarrow R$ and
$g: \overline{Q}\rightarrow Q$ are $\mathbb{C}[\partial]$-module homomorphisms.
Since $g(y)=u(g(y))\oplus v(g(y))=-f(y)\oplus y$, $vg=Id_{\overline{Q}}$.
So, $v$ is also surjective. Therefore, $v: Q\rightarrow \overline{Q}$ is a $\mathbb{C}[\partial]$-module isomorphism.

Define $\overline{v}: Q\rightarrow \overline{Q}\hookrightarrow E=R\bowtie_{\lhd,\rhd}\overline{Q}$,
as follows:
\begin{eqnarray*}
\overline{v}(x)=v(x)=-u(x)\oplus x,~~~~\text{for any $x\in Q$.}
\end{eqnarray*}
By Lemma \ref{llem1}, for proving this theorem, we only need to show that
$-u: Q\rightarrow R$ is a deformation map. This can be directly obtained with a similar computation as that in (1) of Lemma \ref{llem1} according to that
$\overline{Q}=\text{Im}(\overline{v})$ is a Lie conformal subalgebra of $E$.

Now the proof is finished.
\end{proof}
\begin{defi}
Let $(R, Q, \lhd_\lambda, \rhd_\lambda)$ be a matched pair of Lie conformal algebras.
For two deformation maps $\varphi$, $\phi: Q\rightarrow R$, if there exists
a $\mathbb{C}[\partial]$-module isomorphism $\alpha: Q\rightarrow Q$ such that
for any $x$, $y\in Q$,
\begin{gather}
\alpha([x_\lambda y])-[\alpha(x)_\lambda\alpha(y)]\nonumber\\
=\alpha(x)\lhd_\lambda \phi(\alpha(y))-\alpha(x\lhd_\lambda \varphi(y))-\alpha(y)\lhd_{-\lambda-\partial} \phi(\alpha(x))+\alpha(y\lhd_{-\lambda-\partial} \varphi(x)),
\end{gather}
then $\varphi$ and  $\phi$ are called \emph{equivalent} and denote this by
$\varphi\equiv \phi$.
\end{defi}
\begin{thm}\label{t2}
Let $R$ and $E$ be two Lie conformal algebras, $R\subseteq E$, and $Q$ be a complement of $R$ in $E$
associated with the matched pair of Lie conformal algebras $(R, Q, \lhd_\lambda, \rhd_\lambda)$. Then
$\equiv$ is an equivalence relation on $\mathcal{DM}(Q,R\mid(\lhd_\lambda,\rhd_\lambda))$ and
the map
\begin{eqnarray*}
\mathcal{HL}^2(Q,R\mid(\lhd_\lambda,\rhd_\lambda)):=\mathcal{DM}(Q,R\mid(\lhd_\lambda,\rhd_\lambda))/\equiv\rightarrow
\mathcal{F}(R,E),~~~~\overline{\varphi}\rightarrow Q_\varphi,
\end{eqnarray*}
is a bijection between $\mathcal{HL}^2(Q,R\mid(\lhd_\lambda,\rhd_\lambda))$ and
the isomorphism classes of complements of $R$ in $E$.
\end{thm}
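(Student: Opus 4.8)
The plan is to reduce every assertion to a single observation: for a $\mathbb{C}[\partial]$-module isomorphism $\alpha: Q\to Q$ and two deformation maps $\varphi,\phi\in\mathcal{DM}(Q,R\mid(\lhd_\lambda,\rhd_\lambda))$, the identity defining $\varphi\equiv\phi$ via $\alpha$ holds if and only if $\alpha$ is an isomorphism of Lie conformal algebras $Q_\varphi\to Q_\phi$. To establish this, I would expand the homomorphism requirement $\alpha([x_\lambda y]_\varphi)=[\alpha(x)_\lambda\alpha(y)]_\phi$ using the two deformed brackets from Lemma \ref{llem1}(2): the left side becomes $\alpha([x_\lambda y])+\alpha(x\lhd_\lambda\varphi(y))-\alpha(y\lhd_{-\lambda-\partial}\varphi(x))$ and the right side $[\alpha(x)_\lambda\alpha(y)]+\alpha(x)\lhd_\lambda\phi(\alpha(y))-\alpha(y)\lhd_{-\lambda-\partial}\phi(\alpha(x))$; transposing the $\lhd$-terms reproduces exactly the displayed equivalence identity. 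This characterization is the technical heart of the proof, and it is the step I expect to demand the most care, since the spectral parameters $\lambda$ and $-\lambda-\partial$ must be tracked through each $\lhd$-term with correct signs; once inserted, however, it is pure bookkeeping.

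Granting the characterization, the equivalence-relation claim is immediate from the groupoid structure of isomorphisms. Reflexivity uses $\alpha=\mathrm{Id}$, which satisfies the identity trivially, so $\varphi\equiv\varphi$. Symmetry holds because if $\alpha:Q_\varphi\to Q_\phi$ is a Lie conformal algebra isomorphism then so is $\alpha^{-1}:Q_\phi\to Q_\varphi$, witnessing $\phi\equiv\varphi$. Transitivity follows from composing two such isomorphisms. I would record each of these in one line, invoking the characterization rather than recomputing.

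For the bijection I would argue in three steps. \emph{Well-definedness}: if $\varphi\equiv\phi$ then by the characterization $Q_\varphi\cong Q_\phi$ as Lie conformal algebras, so they determine the same class in $\mathcal{F}(R,E)$; moreover Lemma \ref{llem1}(2) identifies $Q_\varphi$ with the genuine complement $\overline{Q}=\mathrm{Im}(f_\varphi)$, so the assignment $\overline{\varphi}\mapsto Q_\varphi$ indeed lands in $\mathcal{F}(R,E)$. \emph{Surjectivity}: this is precisely Theorem \ref{t1}, which guarantees that every complement $\overline{Q}$ of $R$ in $E$ satisfies $\overline{Q}\cong Q_\varphi$ for some deformation map $\varphi$, so its isomorphism class is hit. \emph{Injectivity}: if $Q_\varphi$ and $Q_\phi$ lie in the same class, choose a Lie conformal algebra isomorphism $\alpha:Q_\varphi\to Q_\phi$; since the underlying $\mathbb{C}[\partial]$-module of both is $Q$, such an $\alpha$ is a $\mathbb{C}[\partial]$-module isomorphism $Q\to Q$, and the characterization forces $\varphi\equiv\phi$, i.e. $\overline{\varphi}=\overline{\phi}$.

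Thus the entire theorem rests on the one computation identifying the equivalence relation on deformation maps with abstract isomorphism of the deformations $Q_\varphi$; everything else—the three relation axioms and the three properties of the map—is formal bookkeeping over that characterization together with Theorem \ref{t1} and Lemma \ref{llem1}.
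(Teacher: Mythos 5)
Your proposal is correct and follows essentially the same route as the paper: the paper's own proof consists of exactly your key characterization (that $Q_\varphi\cong Q_\phi$ as Lie conformal algebras if and only if $\varphi\equiv\phi$, which it states without computation) together with an appeal to Theorem \ref{t1} for surjectivity. Your write-up simply makes explicit the expansion of $\alpha([x_\lambda y]_\varphi)=[\alpha(x)_\lambda\alpha(y)]_\phi$ and the resulting groupoid bookkeeping that the paper leaves as ``easy to see.''
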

\begin{proof}
It is easy to see that $Q_\phi\cong Q_\varphi$ if and only if $\phi$ and $\varphi$ are equivalent. Then this theorem follows by Theorem \ref{t1}.
\end{proof}
\begin{rmk}
By Theorem \ref{t2}, $[E: R]=\mid \mathcal{HL}^2(Q,R\mid(\lhd_\lambda,\rhd_\lambda)) \mid$.
Moreover, with the condition in Theorem \ref{t2}, when $R$ is an ideal of $E$, it is easy to see that $[E: R]=1$, i.e. all complements of $R$ in $E$ are isomorphic to $Q$.
\end{rmk}

\begin{rmk}
According to the characterization of free rank one Lie conformal algebras in \cite{K1}, any free rank one Lie conformal algebra is either abelian or isomorphic to Virasoro conformal algebra. Therefore,  with the condition in Theorem \ref{t2}, when $Q$ is a free $\mathbb{C}[\partial]$-module of rank one,  $[E: R]\leq 2$.
\end{rmk}
\begin{ex}\label{e1}
Let $\mathcal{W}(a,b)=\mathbb{C}[\partial]L\oplus \mathbb{C}[\partial]W$ be the Lie conformal algebra with $\lambda$-brackets given by
\begin{eqnarray}
[L_\lambda L]=(\partial+2\lambda)L,~~~[L_\lambda W]=(\partial+a\lambda+b)W,~~[W_\lambda W]=0,
\end{eqnarray}
for some $a$, $b\in \mathbb{C}$. Set $R=\mathbb{C}[\partial]L$ and $Q=\mathbb{C}[\partial]W$ be the subalgebras of $\mathcal{W}(a,b)$. Obviously, the abelian subalgebra $Q$ is a complement of $R$ in $\mathcal{W}(a,b)$. Moreover, $\mathcal{W}(a,b)=R\bowtie_{\lhd,\rhd}Q$ with $\rhd_\lambda$ trivial and $\lhd_\lambda$ satisfying $W\lhd_\lambda L=((a-1)\partial+a\lambda-b)W$.

Let $\varphi: Q\rightarrow R$ be a deformation map. Assume that $\varphi(W)=f(\partial)L$ for some $f(\partial)\in \mathbb{C}[\partial]$. Setting $x=W$ and $y=W$ in (\ref{eq2}) and comparing the coefficients of $L$, we can get
\begin{eqnarray}\label{eq3}
-f(-\lambda)f(\lambda+\partial)(\partial+2\lambda)=f(\partial)(-f(-\lambda)(\partial+a\lambda+b)-f(\lambda+\partial)((a-1)\partial+a\lambda-b)).
\end{eqnarray}
By comparing the coefficients of $\lambda$ in (\ref{eq3}), we can get $f(\partial)=\alpha$ for some $\alpha\in \mathbb{C}$. Taking it into (\ref{eq3}), one can get that $f(\partial)=0$ if $a\neq 1$, and
$f(\partial)=\alpha$ if $a=1$. When $a=1$, the corresponding Lie conformal algebra $Q_\alpha=Q_\varphi=\mathbb{C}[\partial]W$ is as follows:
\begin{eqnarray}
[W_\lambda W]=W\lhd_\lambda \varphi(W)-W\lhd_{-\lambda-\partial}\varphi(W)=\alpha (\partial+2\lambda)W.
\end{eqnarray}
Note that $Q_0$ is abelian and $Q_\alpha\cong Vir$ when $\alpha\neq 0$.
Therefore, by the discussion above, we can easily get that $[\mathcal{W}(a,b): R]=1$ when $a\neq 1$ and
$[\mathcal{W}(1,b): R]=2$.
\end{ex}
Then we generalize Example \ref{e1} as follows.
\begin{ex}
Let $E=\mathbb{C}[\partial]L\bigoplus\oplus _{i=1}^n\mathbb{C}[\partial]W_i$ be the Lie conformal algebra with $\lambda$-brackets as follows:
\begin{eqnarray}
[L_\lambda L]=(\partial+2\lambda)L,~~~[L_\lambda W_i]=(\partial+\lambda+b)W_i,~~[{W_i}_\lambda W_j]=0,
\end{eqnarray}
for any $i$, $j\in \{1,\cdots,n\}$ and $b\in \mathbb{C}$. Let $R=\mathbb{C}[\partial]L$ and
$Q=\oplus _{i=1}^n\mathbb{C}[\partial]W_i$ be two subalgebras of $E$. Obviously, $Q$ is a complement of $R$ in $E$ and  $E=R\bowtie_{\lhd,\rhd}Q$ with $\rhd_\lambda$ trivial and $\lhd_\lambda$ satisfying $W_i\lhd_\lambda L=(\lambda-b)W$.
For any deformation map $\varphi:Q\rightarrow R$, set $\varphi(W_i)=f_i(\partial)L$ for any $i\in \{1,\cdots, n\}$. Setting $x=W_i$ and $y=W_j$ in (\ref{eq2}), we can get
\begin{eqnarray}\label{eq4}
-f_i(-\lambda)f_j(\lambda+\partial)(\partial+2\lambda)=-f_i(-\lambda)f_i(\partial)(\partial+\lambda+b)
-f_i(\partial)f_j(\lambda+\partial)(\lambda-b).
\end{eqnarray}
Setting $i=j$ in (\ref{eq4}) and by the result of Example \ref{e1}, we get $f_i(\partial)=\alpha_i$ for any $i\in \{1,\cdots,n\}$ and $\alpha_i\in \mathbb{C}$. Then it is easy to see that (\ref{eq4}) naturally holds.
The $\lambda$-bracket on the $\varphi$-deformation of $Q$ is described as follows:
\begin{eqnarray}
[{W_i}_\lambda W_j]=\alpha_j(\lambda-b)W_i+\alpha_i(\partial+\lambda+b)W_j, ~~\text{for any $i$, $j\in \{1,\cdots, n\}$.}
\end{eqnarray}
Note that when $\alpha_i\neq 0$, $\mathbb{C}[\partial]W_i$ is isomorphic to the Virasoro conformal algebra. According to the number of Virasoro conformal algebra such as $\mathbb{C}[\partial]W_i$ and classification result of semisimple Lie conformal algebras in \cite{DK1}, we can get
$[E:R]=n$.
\end{ex}
\begin{ex}
Let $\widetilde{SV}=\mathbb{C}[\partial]L\oplus \mathbb{C}[\partial]Y\oplus \mathbb{C}[\partial]M \oplus \mathbb{C}[\partial]N$ be the extended Schr\"odinger-Virasoro Lie conformal algebra introduced in \cite{SY} with $\lambda$-brackets given by
\begin{eqnarray*}
&&[L_\lambda L]=(\partial+2\lambda)L,~~~[L_\lambda Y]=(\partial+\frac{3}{2}\lambda)Y,\\
&&[L_\lambda M]=(\partial+\lambda)M,~~~[Y_\lambda Y]=(\partial+2\lambda)M,\\
&&[L_\lambda N]=(\partial+\lambda)N,~~~[M_\lambda N]=-2M,\\
&&[Y_\lambda N]=-Y,~~~[Y_\lambda M]=[M_\lambda M]=[N_\lambda N]=0.
\end{eqnarray*}
Let $R=\mathbb{C}[\partial]L\oplus \mathbb{C}[\partial]N$ and $Q=\mathbb{C}[\partial]Y\oplus \mathbb{C}[\partial]M$ be two subalgebras of $\widetilde{SV}$. Moreover, $\widetilde{SV}=R\bowtie_{\lhd,\rhd}Q$ with $\rhd_\lambda$ trivial and
$\lhd_\lambda$ satisfying $M\lhd_\lambda L=\lambda M$, $Y\lhd_\lambda L=(\frac{1}{2}\partial+\frac{3}{2}\lambda)Y$, $M\lhd_\lambda N=- 2M$ and  $Y\lhd_\lambda N=-Y$.

Let $\varphi: Q\rightarrow R$ be a deformation map. Set $\varphi(Y)=f(\partial)L+g(\partial)N$ and $\varphi(M)=h(\partial)L+k(\partial)N$ for some $f(\partial)$, $g(\partial)$, $h(\partial)$ and $k(\partial)\in \mathbb{C}[\partial]$. Setting $(x,y)$ in (\ref{eq2}) be $(Y,Y)$, $(Y,M)$ and $(M,M)$ respectively and comparing the coefficients of $L$ and $N$, we can obtain
\begin{eqnarray}
&&\label{eqq1}(\partial+2\lambda)h(\partial)-f(-\lambda)f(\lambda+\partial)(\partial+2\lambda)
=f(\partial)(-f(-\lambda)(\partial+\frac{3}{2}\lambda)-g(-\lambda))\\
&&+f(\partial)(-f(\lambda+\partial)(\frac{1}{2}\partial+\frac{3}{2}\lambda)
+g(\lambda+\partial)),\nonumber\\
&&\label{eqq2}(\partial+2\lambda)k(\partial)-f(-\lambda)g(\lambda+\partial)(\partial+\lambda)-\lambda g(-\lambda)f(\lambda+\partial)
=g(\partial)(-f(-\lambda)(\partial+\frac{3}{2}\lambda))\\
&&+g(\partial)(-g(-\lambda)-f(\lambda+\partial)(\frac{1}{2}\partial+\frac{3}{2}\lambda)
+g(\lambda+\partial)),\nonumber\\
&&\label{eqq3}-f(-\lambda)h(\lambda+\partial)(\partial+2\lambda)=-(f(-\lambda)(\partial+\lambda)+2g(-\lambda))h(\partial)\\
&&-(h(\lambda+\partial)(\frac{1}{2}\partial+\frac{3}{2}\lambda)-k(\lambda+\partial))f(\partial),\nonumber\\
&&\label{eqq4}-f(-\lambda)k(\lambda+\partial)(\partial+\lambda)-g(-\lambda)h(\lambda+\partial)\lambda=-(f(-\lambda)(\partial+\lambda)+2g(-\lambda))k(\partial)\\
&&-(h(\lambda+\partial)(\frac{1}{2}\partial+\frac{3}{2}\lambda)-k(\lambda+\partial))g(\partial),\nonumber\\
&&\label{eqq5}-h(-\lambda)h(\lambda+\partial)(\partial+2\lambda)=h(\partial)(-(\partial+\lambda)h(-\lambda)-2k(-\lambda))\\
&&+h(\partial)(-h(\lambda+\partial)\lambda+2k(\lambda+\partial)),\nonumber\\
&&\label{eqq6}-h(-\lambda)k(\lambda+\partial)(\partial+\lambda)-k(-\lambda)h(\lambda+\partial)\lambda=k(\partial)(-(\partial+\lambda)h(-\lambda)-2k(-\lambda))\\
&&+k(\partial)(-h(\lambda+\partial)\lambda+2k(\lambda+\partial))\nonumber.
\end{eqnarray}
Setting $\lambda=0$ in (\ref{eqq6}), we can get $2k(\partial)(k(\partial)-k(0))=0$. Therefore,
$k(\partial)=d$ for some $d\in \mathbb{C}$. Taking it into (\ref{eqq5}) and comparing the degree of $\lambda$, one can obtain that $h(\partial)=c$ for some $c\in \mathbb{C}$. Then (\ref{eqq5}) and (\ref{eqq6}) naturally hold.

If $c\neq 0$, by (\ref{eqq3}), we can get
\begin{eqnarray}\label{eqq8}
-f(-\lambda)\lambda=-2g(-\lambda)-(\frac{1}{2}\partial+\frac{3}{2}\lambda)f(\partial)+\frac{d}{c}f(\partial).
\end{eqnarray}
By comparing the degree of $\partial$ in (\ref{eqq8}), one can obtain that $f(\partial)=g(\partial)=0$, which contradicts with (\ref{eqq1}). Therefore, $c=0$. It follows from (\ref{eqq3}) and (\ref{eqq4}) that $df(\partial)=0$ and
$d(g(\partial)-2g(-\lambda))=0$. Therefore, if $d\neq 0$, then $f(\partial)=g(\partial)=0$, which contradicts with (\ref{eqq2}). Thus, $d=0$. Then setting $\lambda=0$ in (\ref{eqq2}) yields
$g(\partial)(g(\partial)-\frac{1}{2}f(\partial)\partial-g(0))=0$. If $g(\partial)\neq 0$,
$g(\partial)=\frac{1}{2}f(\partial)\partial+g(0)$. Taking it into (\ref{eqq2}) and by some computation, we can get
\begin{eqnarray}\label{eqq7}
f(-\lambda)f(\lambda+\partial)(\partial^2+2\lambda\partial)=\lambda\partial f(\partial)f(\lambda+\partial)
+\lambda\partial f(\partial)f(-\lambda)+\partial^2f(\partial)f(-\lambda).
\end{eqnarray}
Therefore, it follows that $f(\partial)=a$ for some $a\in \mathbb{C}$ by comparing the degree of $\lambda$
in (\ref{eqq7}). Consequently, $g(\partial)=\frac{1}{2}a\partial+b$ for some $b\in \mathbb{C}$. Then (\ref{eqq1}) naturally holds. If $g(\partial)=0$, it can be obtained from (\ref{eqq1}) that $f(\partial)=0$.
By the discussion above, in this case,  we get that $h(\partial)=k(\partial)=0$, $f(\partial)=a$ and $g(\partial)=\frac{1}{2}a\partial+b$ for some $a$, $b\in \mathbb{C}$. Denote the corresponding deformation map by $\varphi_{a,b}$.

The $\varphi_{a,b}$-deformation $Q_{\varphi_{a,b}}$ of $Q$ can be described as follows:
\begin{eqnarray}
&&[Y_\lambda Y]_{\varphi_{a,b}}=(\partial+2\lambda)M+a(\partial+2\lambda)Y,\\
&&[Y_\lambda M]_{\varphi_{a,b}}=(a\partial+2b)M,~~[M_\lambda M]_{\varphi_{a,b}}=0.
\end{eqnarray}

Suppose $a\neq 0$. Note that $Q_{\varphi_{a,b}}$ is isomorphic to $\widetilde{Q}_{c}=\mathbb{C}[\partial]Y\oplus
\mathbb{C}[\partial]M$ with the following $\lambda$-brackets
\begin{eqnarray}
&&[Y_\lambda Y]=(\partial+2\lambda)Y+(\partial+2\lambda)M,\\
&&[Y_\lambda M]=(\partial+2c)M,~~[M_\lambda M]_{\varphi_{a,b}}=0,
\end{eqnarray}
through the isomorphism $\phi: Q_{\varphi_{a,b}}\rightarrow \widetilde{Q}_{c}$
given by $\phi(Y)=\frac{1}{a} Y$ and $\phi(M)= \frac{1}{a^2} M$ where $c=\frac{b}{a}$. Then it is easy to see that for any
$c_1\neq c_2 $, $\widetilde{Q}_{c_1}$ is not isomorphic to $\widetilde{Q}_{c_2}$.

When $a=0$, it is easy to see that $\varphi_{0,b}\equiv\varphi_{0,1}$ for all $b\neq 0$ and $\varphi_{0,1}$ is not equivalent to $\varphi_{0,0}$.

Note that when $a\neq 0$, $Q_{\varphi_{a,b}}$ is not solvable and $Q_{\varphi_{0,b}}$ is solvable. Therefore, $Q_{\varphi_{a,b}}$ is not isomorphic to $Q_{\varphi_{0,1}}$ and $Q_{\varphi_{0,0}}$. By the discussion above, the set $\mathcal{F}(R,E)$ can be described as $Q_{\varphi_{0,1}}$, $Q_{\varphi_{0,0}}$
and $\widetilde{Q}_{c}$ for all $c\in \mathbb{C}$. Consequently, $[E:R]=\infty$.
\end{ex}

\section{Classifying complements for associative conformal algebras}
In this section, we apply the method developed in Section 3 to the case of associative conformal algebras.

First ,we introduce the definitions of modules over associative conformal algebras.
\begin{defi}
A \emph{left module} $M$ over an associative conformal algebra $A$ is a $\mathbb{C}[\partial]$-module endowed with a $\mathbb{C}$-bilinear map
$A\times M\longrightarrow M[\lambda]$, $(a, v)\mapsto a\rightharpoonup_\lambda v$, satisfying the following axioms $(a, b\in A, v\in M)$:\\
(LM1)$\qquad\qquad (\partial a)\rightharpoonup_\lambda v=-\lambda a\rightharpoonup_\lambda v,~~~a\rightharpoonup_\lambda(\partial v)=(\partial+\lambda)a\rightharpoonup_\lambda v,$\\
(LM2)$\qquad\qquad (a_\lambda b)\rightharpoonup_{\lambda+\mu}v=a\rightharpoonup_\lambda(b\rightharpoonup_\mu v).$\\
We also denote it by $(M,\rightharpoonup_\lambda)$.

A \emph{right module} $M$ over an associative conformal algebra $A$ is a $\mathbb{C}[\partial]$-module endowed with a $\mathbb{C}$-bilinear map
$M\times A\longrightarrow M[\lambda]$, $(v, a)\mapsto v\lhd_\lambda a$, satisfying the following axioms $(a, b\in A, v\in M)$:\\
(RM1)$\qquad\qquad (\partial v)\lhd_\lambda a=-\lambda v\lhd_\lambda a,~~~v\lhd_\lambda(\partial a)=(\partial+\lambda)v\lhd_\lambda a,$\\
(RM2)$\qquad\qquad (v\lhd_\lambda a)_{\lambda+\mu}b=v\lhd_\lambda(a_\mu b).$\\
Usually, we denote it by $(M,\lhd_\lambda)$.

An $A$-\emph{bimodule} is $(M,\rightharpoonup_\lambda,\triangleleft_\lambda)$ such that $(M,\rightharpoonup_\lambda)$ is a left $A$-module, $(M,\lhd_\lambda)$ is a right $A$-module, and
they satisfy the following condition
\begin{eqnarray}
(a\rightharpoonup_\lambda v)\lhd_{\lambda+\mu}b=a\rightharpoonup_\lambda(v\lhd_\mu b),
\end{eqnarray}
where $a$, $b\in A$ and $v\in M$.
\end{defi}

\begin{defi}
Let $A$ and $Q$ be two associative conformal algebras.
A \emph{matched pair} of $A$ and $Q$ is of the form $(A,Q,\lhd_\lambda,  \rhd_\lambda, \leftharpoonup_\lambda,\rightharpoonup_\lambda)$, where
$(Q,\rightharpoonup_\lambda,\lhd_\lambda)$ is an $A$-bimodule, $(A,\rhd_\lambda, \leftharpoonup_\lambda)$ is a $Q$-bimodule, and
the following compatibility conditions hold for all $a$, $b\in A$, $x$, $y\in Q$:
\begin{eqnarray}
&&(a_\lambda b)\leftharpoonup_{\lambda+\mu}x=a_\lambda(b\leftharpoonup_\mu x)+a\leftharpoonup_\lambda(b\rightharpoonup_\mu x),\\
&&a_\lambda(x\rhd_\mu b)+a\leftharpoonup_\lambda(x\lhd_\mu b)
=(a\leftharpoonup_\lambda x)_{\lambda+\mu}b+(a\rightharpoonup_\lambda x)\rhd_{\lambda+\mu}b,\\
&&x\rhd_\lambda(a_\mu b)=(x\rhd_\lambda a)_{\lambda+\mu}b+(x\lhd_\lambda a)\rhd_{\lambda+\mu}b,\\
&&a\rightharpoonup_\lambda(x_\mu y)=(a\rightharpoonup_\lambda x)_{\lambda+\mu}y+(a\leftharpoonup_\lambda x)\rightharpoonup_{\lambda+\mu} y,\\
&&x\lhd_\lambda(a\leftharpoonup_\mu y)+x_\lambda(a\rightharpoonup_\mu y)
=(x\rhd_\lambda a)\rightharpoonup_{\lambda+\mu}y+(x\lhd_\lambda a)_{\lambda+\mu}y,\\
&&x\lhd_\lambda(y\rhd_\mu a)+x_\lambda(y\lhd_\mu a)
=(x_\lambda y)\lhd_{\lambda+\mu}a.
\end{eqnarray}
\end{defi}

\begin{defi} (see Theorem 3.2 in \cite{H})
Let $(A,Q,\lhd_\lambda,  \rhd_\lambda, \leftharpoonup_\lambda,\rightharpoonup_\lambda)$ be a matched pair of associative conformal algebras. Then the direct sum $A\oplus Q$ of $\mathbb{C}[\partial]$-modules is an associative conformal algebra with the following $\lambda$-product
\begin{eqnarray}
(a\oplus x)_\lambda (b\oplus y)=(a_\lambda b+a\leftharpoonup_\lambda y+x\rhd_\lambda b)\oplus (a\rightharpoonup_\lambda y+x\lhd_\lambda b+x_\lambda y),
\end{eqnarray}
for all $a$, $b\in A$ and $x$, $y\in Q$. This associative conformal algebra is called the \emph{bicrossed product} of $A$ and $Q$ and we denote it by  $A\bowtie Q$.
\end{defi}

\begin{pro} (see Proposition 4.4 in \cite{H})
Let $A$ and $E$ be two associative conformal algebras and $A\subseteq E$. Set $Q$ be a complement of $A$ in $E$. Then $E$ is isomorphic to a bicrossed product $A\bowtie Q$ of associative conformal algebras associated to a matched pair $(A,Q,\lhd_\lambda,  \rhd_\lambda, \leftharpoonup_\lambda,\rightharpoonup_\lambda)$.
\end{pro}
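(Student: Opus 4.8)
The plan is to recover the four structure maps of the matched pair directly from the multiplication of $E$, exactly as in the Lie conformal case treated above. Write $E=A\oplus Q$ as $\mathbb{C}[\partial]$-modules and let $p_A\colon E\to A$ and $p_Q\colon E\to Q$ denote the canonical projections, which are $\mathbb{C}[\partial]$-module homomorphisms. For $a,b\in A$ and $x,y\in Q$ I would define
\begin{eqnarray*}
a\leftharpoonup_\lambda x:=p_A(a_\lambda x),\quad a\rightharpoonup_\lambda x:=p_Q(a_\lambda x),\quad
x\rhd_\lambda a:=p_A(x_\lambda a),\quad x\lhd_\lambda a:=p_Q(x_\lambda a),
\end{eqnarray*}
so that $a_\lambda x=(a\leftharpoonup_\lambda x)\oplus(a\rightharpoonup_\lambda x)$ and $x_\lambda a=(x\rhd_\lambda a)\oplus(x\lhd_\lambda a)$ in $E$. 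Since $A$ and $Q$ are subalgebras we have $a_\lambda b\in A$ and $x_\lambda y\in Q$, so with these definitions the multiplication of $E$ is term by term the bicrossed product formula of the preceding definition. The sesquilinearity axioms (LM1) and (RM1) for all four maps are inherited immediately, because $p_A$ and $p_Q$ commute with $\partial$ and the conformal sesquilinearity already holds in $E$.

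The heart of the proof is to check that $(A,Q,\lhd_\lambda,\rhd_\lambda,\leftharpoonup_\lambda,\rightharpoonup_\lambda)$ is a matched pair, and I would extract every required identity from the single associativity law $(u_\lambda v)_{\lambda+\mu}w=u_\lambda(v_\mu w)$ of $E$. Specializing $(u,v,w)$ to the six mixed triples $(a,b,x)$, $(x,a,b)$, $(a,x,b)$, $(a,x,y)$, $(x,a,y)$, $(x,y,a)$ with $a,b\in A$ and $x,y\in Q$, expanding both sides with the definitions above, and then applying $p_A$ and $p_Q$, produces twelve identities. For instance, $(a,b,x)$ projected to $Q$ gives the left $A$-module axiom $(a_\lambda b)\rightharpoonup_{\lambda+\mu}x=a\rightharpoonup_\lambda(b\rightharpoonup_\mu x)$ for $Q$, while its $A$-projection gives the first compatibility condition; likewise $(x,a,b)$ yields the right $A$-module axiom for $Q$ together with the third condition, $(a,x,b)$ yields the $A$-bimodule condition on $Q$ together with the second condition, and $(a,x,y)$, $(x,a,y)$, $(x,y,a)$ yield the two $Q$-module axioms and the $Q$-bimodule condition on $A$ together with the fourth, fifth and sixth conditions. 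Thus the twelve identities are precisely the four module axioms, the two bimodule conditions and the six compatibility relations of a matched pair; the two remaining pure triples $(a,b,c)$ and $(x,y,z)$ reduce to the associativity of $A$ and of $Q$, which holds by hypothesis.

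Having assembled a matched pair, I would form the bicrossed product $A\bowtie Q$ on $A\oplus Q$. By construction its $\lambda$-multiplication agrees term by term with that of $E$ under the identification $a\oplus x\leftrightarrow a+x$, so the identity map is an isomorphism $A\bowtie Q\cong E$ of associative conformal algebras, which is the claim. The only real obstacle is bookkeeping: organizing the eight associativity specializations and their two projections without sign or index errors in the $\lambda$-variables. No new idea is needed beyond this case analysis, which parallels the Lie conformal argument above and Proposition 4.4 in \cite{H}.
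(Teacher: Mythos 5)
Your proof is correct: defining the four structure maps as the $A$- and $Q$-projections of the mixed $\lambda$-products, and extracting the twelve matched-pair identities (four module axioms, two bimodule compatibilities, six mixed conditions) from the two projections of the six mixed associativity specializations, checks out exactly against the paper's definition of a matched pair, and the identification of $E$ with $A\bowtie Q$ then follows term by term. The paper itself offers no proof of this proposition---it simply cites Proposition 4.4 of \cite{H}---and your projection-plus-associativity argument is precisely the standard proof behind that cited result, so the two approaches coincide.
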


\begin{defi}
Let $(A,Q,\lhd_\lambda,  \rhd_\lambda, \leftharpoonup_\lambda,\rightharpoonup_\lambda)$ be a matched pair of associative conformal algebras. If a $\mathbb{C}[\partial]$-module homomorphism
$\varphi:Q\rightarrow A$ satisfies
\begin{eqnarray}
\varphi(x_\lambda y)-\varphi(x)_\lambda \varphi(y)
=\varphi(x)\leftharpoonup_\lambda y+x\rhd_\lambda\varphi(y)-\varphi(\varphi(x)\rightharpoonup_\lambda y)-\varphi(x\lhd_\lambda \varphi(y)),
\end{eqnarray}
for any $x$, $y\in Q$, then $\varphi$ is called a \emph{deformation map} of $(A,Q,\lhd_\lambda,  \rhd_\lambda, \leftharpoonup_\lambda,\rightharpoonup_\lambda)$.
\end{defi}

Denote the set of all deformation maps of $(A,Q,\lhd_\lambda,  \rhd_\lambda, \leftharpoonup_\lambda,\rightharpoonup_\lambda)$ by
$\mathcal{DM}(Q,A\mid(\lhd_\lambda,  \rhd_\lambda, \leftharpoonup_\lambda,\rightharpoonup_\lambda))$.

\begin{lem}\label{lem1}
Let $A$ and $E$ be two associative conformal algebras, $A\subset E$, and $Q$ be a given complement of $A$ in $E$. Suppose $\varphi: Q\rightarrow A$ is a deformation map of the matched pair $(A,Q,\lhd_\lambda,  \rhd_\lambda, \leftharpoonup_\lambda,\rightharpoonup_\lambda)$.\\
(1) Set $f_\varphi: Q\rightarrow E=A\oplus Q$ be a $\mathbb{C}[\partial]$-module homomorphism defined as follows:
\begin{eqnarray*}
f_\varphi(x)=\varphi(x)\oplus x,~~~\text{for any $x\in Q$.}
\end{eqnarray*}
Then $\overline{Q}=\text{Im}(f_\varphi)
$ is a complement of $A$ in $E$.\\
(2) Set $Q_\varphi=Q$ as a $\mathbb{C}[\partial]$-module. Then
$Q_\varphi$ is an associative conformal algebra with the following $\lambda$-product
\begin{eqnarray}
{x_\lambda y}_\varphi:=x_\lambda y+x\lhd_\lambda \varphi(y)+\varphi(x)\rightharpoonup_\lambda y,~~~\text{for any $x$, $y\in Q$.}
\end{eqnarray}
$Q_\varphi$ is called the \emph{$\varphi$-deformation} of $Q$. Moreover, as associative conformal algebras, $Q_\varphi\cong \overline{Q}$.
\end{lem}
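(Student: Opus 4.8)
The plan is to follow verbatim the strategy used for the Lie conformal case in Lemma~\ref{llem1}, since the associative setting only alters the bicrossed-product formula and the shape of the deformation-map identity. The entire argument rests on a single algebraic observation: the defining identity of a deformation map, after rearrangement and using that $\varphi$ is $\mathbb{C}$-linear, forces the $A$-component of a product of two elements of $\overline{Q}$ to equal $\varphi$ applied to the $Q$-component. Once this ``collapsing'' form is in hand, both parts drop out immediately.

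For part (1), I would first record that $f_\varphi$ is an injective $\mathbb{C}[\partial]$-module map whose $Q$-component is the identity, so $\overline{Q}=\text{Im}(f_\varphi)$ satisfies $E=A\oplus\overline{Q}$ as $\mathbb{C}[\partial]$-modules; hence it suffices to check that $\overline{Q}$ is a subalgebra. Expanding the bicrossed-product $\lambda$-product gives
$$(\varphi(x)\oplus x)_\lambda(\varphi(y)\oplus y)=\big(\varphi(x)_\lambda\varphi(y)+\varphi(x)\leftharpoonup_\lambda y+x\rhd_\lambda\varphi(y)\big)\oplus\big(\varphi(x)\rightharpoonup_\lambda y+x\lhd_\lambda\varphi(y)+x_\lambda y\big).$$
The deformation-map identity, rewritten as $\varphi(x)_\lambda\varphi(y)+\varphi(x)\leftharpoonup_\lambda y+x\rhd_\lambda\varphi(y)=\varphi\big(x_\lambda y+x\lhd_\lambda\varphi(y)+\varphi(x)\rightharpoonup_\lambda y\big)$, shows that the first component is exactly $\varphi$ of the second. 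Setting $z:=x_\lambda y+x\lhd_\lambda\varphi(y)+\varphi(x)\rightharpoonup_\lambda y\in Q$, the product equals $\varphi(z)\oplus z=f_\varphi(z)\in\overline{Q}$, so $\overline{Q}$ is closed under the product and is an $A$-complement of $E$.

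For part (2), the map $g_\varphi\colon Q_\varphi\to\overline{Q}$, $g_\varphi(x)=\varphi(x)\oplus x$, is plainly a $\mathbb{C}[\partial]$-module isomorphism, so it remains only to see that it respects the products. But the computation just performed is precisely the statement that $g_\varphi(x)_\lambda g_\varphi(y)=\varphi(z)\oplus z$, where $z$ is the element $x_\lambda y+x\lhd_\lambda\varphi(y)+\varphi(x)\rightharpoonup_\lambda y$ defining the $\lambda$-product of $x$ and $y$ in $Q_\varphi$; thus $g_\varphi(x)_\lambda g_\varphi(y)=g_\varphi(z)$. That $Q_\varphi$ is genuinely an associative conformal algebra then needs no separate verification of the sesquilinearity relations or of associativity: these axioms are transported from the associative conformal algebra $\overline{Q}$ through the $\mathbb{C}[\partial]$-module isomorphism $g_\varphi$, which by the above intertwines the defined product on $Q_\varphi$ with the product on $\overline{Q}$. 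Hence $g_\varphi$ is an isomorphism of associative conformal algebras and $Q_\varphi\cong\overline{Q}$.

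The step demanding the most care is the bookkeeping in part (1): the bicrossed product yields six terms distributed between the $A$- and $Q$-components, and one must match the three $A$-valued terms against the right-hand side of the deformation-map identity with the correct signs and the correct arguments of $\varphi$, invoking $\mathbb{C}$-linearity of $\varphi$ to fuse the three $\varphi(\cdots)$ terms into $\varphi(z)$. This is purely computational; there is no conceptual obstacle, and the proof mirrors the Lie case step for step.
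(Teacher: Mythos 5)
Your proposal is correct and takes essentially the same route as the paper: the paper's proof of this lemma simply defers to the Lie conformal case (Lemma~\ref{llem1}), whose argument you have transcribed faithfully to the associative setting — expanding the bicrossed product, using the rearranged deformation-map identity to show the $A$-component is $\varphi$ of the $Q$-component, and then transporting the algebra structure through $g_\varphi$. The bookkeeping of the six terms and the rewriting of the deformation identity are both accurate, so nothing is missing.
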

\begin{proof}
The proof is similar to that in Lemma \ref{llem1}.
\end{proof}
\begin{thm}\label{t3}
Let $A$ and $E$ be two associative conformal algebras, $A\subseteq E$, $Q$ be a complement of $A$ in $E$
associated with the matched pair of associative conformal algebras $(A,Q,\lhd_\lambda,  \rhd_\lambda, \leftharpoonup_\lambda,\rightharpoonup_\lambda)$.
Then $\overline{Q}$ is a complement of $A$ in $E$ if and only if there exists a deformation map
$\varphi: Q\rightarrow A$ such that $\overline{Q}\cong Q_\varphi$.
\end{thm}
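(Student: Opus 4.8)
The plan is to mirror the argument of Theorem~\ref{t1}, transporting it from the Lie setting to the associative one; since Lemma~\ref{lem1} already packages the content of the forward implication, essentially all of the work lies in the converse.

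For the ``if'' direction I would simply invoke Lemma~\ref{lem1}: given a deformation map $\varphi\colon Q\to A$, part (1) produces the $A$-complement $\overline{Q}=\mathrm{Im}(f_\varphi)$ and part (2) supplies the isomorphism $Q_\varphi\cong\overline{Q}$ of associative conformal algebras, so nothing further is needed here. For the ``only if'' direction, suppose $\overline{Q}$ is an arbitrary $A$-complement of $E$, so that we have two direct sum decompositions $E=A\oplus Q=A\oplus\overline{Q}$ of $\mathbb{C}[\partial]$-modules. First I would decompose each $x\in Q$ along the second decomposition, writing $x=u(x)\oplus v(x)$ with $u\colon Q\to A$ and $v\colon Q\to\overline{Q}$ the associated $\mathbb{C}[\partial]$-module homomorphisms, and likewise decompose each $y\in\overline{Q}$ as $y=f(y)\oplus g(y)$ along the first decomposition. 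The directness of $E=A\oplus\overline{Q}$ forces $v$ to be injective, and computing $g(y)=u(g(y))\oplus v(g(y))=-f(y)\oplus y$ shows $v\circ g=\mathrm{Id}_{\overline{Q}}$, whence $v$ is also surjective; thus $v\colon Q\to\overline{Q}$ is a $\mathbb{C}[\partial]$-module isomorphism given by $v(x)=-u(x)\oplus x$.

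The key step, and the only place requiring genuine computation, is to show that $\varphi:=-u\colon Q\to A$ is a deformation map of the matched pair. I would argue this by the same mechanism as in part (1) of Lemma~\ref{lem1}: the image $\overline{Q}=\mathrm{Im}(v)$ is by hypothesis an associative conformal subalgebra of $E=A\bowtie\overline{Q}$, so for $x,y\in Q$ the product $(\varphi(x)\oplus x)_\lambda(\varphi(y)\oplus y)$, computed via the bicrossed $\lambda$-product, must again land in $\overline{Q}$; that is, its $A$-component must equal $\varphi$ applied to its $Q$-component. Unwinding the bicrossed product formula, this closure requirement is precisely the defining equation of a deformation map, so that $\varphi=-u\in\mathcal{DM}(Q,A\mid(\lhd_\lambda,\rhd_\lambda,\leftharpoonup_\lambda,\rightharpoonup_\lambda))$. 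Applying Lemma~\ref{lem1} to this $\varphi$ then yields $\overline{Q}\cong Q_\varphi$, which finishes the converse.

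The main obstacle is purely bookkeeping rather than conceptual: in the associative case the bicrossed product involves the six structure maps $\lhd_\lambda,\rhd_\lambda,\leftharpoonup_\lambda,\rightharpoonup_\lambda$ together with the two internal products, so expanding $(\varphi(x)\oplus x)_\lambda(\varphi(y)\oplus y)$ and matching the $A$- and $Q$-components against the deformation-map identity is more delicate than the skew-symmetric Lie computation. Nonetheless, no new idea beyond Theorem~\ref{t1} is required, and the argument closes exactly as in the Lie conformal setting.
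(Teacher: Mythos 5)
Your proposal is correct and coincides with the paper's own treatment: the paper proves Theorem~\ref{t3} simply by declaring it ``similar to that in Theorem~\ref{t1}'', and your argument is exactly that transport --- Lemma~\ref{lem1} for the ``if'' direction, then the decomposition $x=u(x)\oplus v(x)$, the isomorphism $v$, and the closure of $\overline{Q}=\mathrm{Im}(v)$ under the bicrossed $\lambda$-product forcing $\varphi=-u$ to satisfy the deformation-map identity. The only substantive check, that matching the $A$-component against $\varphi$ of the $Q$-component in $(\varphi(x)\oplus x)_\lambda(\varphi(y)\oplus y)$ reproduces the associative deformation equation, works out exactly as you indicate.
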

\begin{proof}
The proof is similar to that in Theorem \ref{t1}.
\end{proof}

\begin{defi}
Let $(A,Q,\lhd_\lambda,  \rhd_\lambda, \leftharpoonup_\lambda,\rightharpoonup_\lambda)$ be a matched pair of associative conformal algebras.
For two deformation maps $\varphi$, $\phi: Q\rightarrow A$, if there exists
a $\mathbb{C}[\partial]$-module isomorphism $\alpha: Q\rightarrow Q$ such that
for any $x$, $y\in Q$,
\begin{gather}
\alpha(x_\lambda y)-\alpha(x)_\lambda\alpha(y)\nonumber\\
=\alpha(x)\lhd_\lambda \phi(\alpha(y))-\alpha(x\lhd_\lambda \varphi(y))+\phi(\alpha(x))\rightharpoonup_\lambda \alpha(y)-\alpha( \varphi(x)\rightharpoonup_\lambda y),
\end{gather}
then $\varphi$ and  $\phi$ are called \emph{equivalent} and denote this by
$\phi\equiv \phi$.
\end{defi}
\begin{thm}\label{t4}
Let $A$ and $E$ be two associative conformal algebras, $A\subseteq E$, and $Q$ be a complement of $A$ in $E$
associated with the matched pair of associative conformal algebras $(A,Q,\lhd_\lambda,  \rhd_\lambda, \leftharpoonup_\lambda,\rightharpoonup_\lambda)$. Then
$\equiv$ is an equivalence relation on $\mathcal{DM}(Q,A\mid(\lhd_\lambda,  \rhd_\lambda, \leftharpoonup_\lambda,\rightharpoonup_\lambda))$ and
the map
\begin{eqnarray*}
\mathcal{HA}^2(Q,A\mid(\lhd_\lambda,  \rhd_\lambda, \leftharpoonup_\lambda,\rightharpoonup_\lambda)):=\mathcal{DM}(Q,A\mid(\lhd_\lambda,  \rhd_\lambda, \leftharpoonup_\lambda,\rightharpoonup_\lambda))/\equiv\rightarrow
\mathcal{F}(A,E),~~~~\overline{\varphi}\rightarrow Q_\varphi,
\end{eqnarray*}
is a bijection between $\mathcal{HA}^2(Q,A\mid(\lhd_\lambda,  \rhd_\lambda, \leftharpoonup_\lambda,\rightharpoonup_\lambda))$ and
the isomorphism classes of complements of $A$ in $E$.
\end{thm}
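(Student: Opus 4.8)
The plan is to mirror the argument for Theorem \ref{t2}, reducing every assertion to the single observation that two deformation maps are equivalent exactly when their deformations are isomorphic as associative conformal algebras. First I would fix notation, writing the $\lambda$-product on $Q_\varphi$ supplied by Lemma \ref{lem1} as $x_\lambda y + x\lhd_\lambda \varphi(y) + \varphi(x)\rightharpoonup_\lambda y$, and likewise for $Q_\phi$ with $\phi$ in place of $\varphi$.

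The central step is to show that, for deformation maps $\varphi,\phi\in\mathcal{DM}(Q,A\mid(\lhd_\lambda, \rhd_\lambda, \leftharpoonup_\lambda,\rightharpoonup_\lambda))$ and a $\mathbb{C}[\partial]$-module isomorphism $\alpha\colon Q\to Q$, the multiplicativity condition $\alpha\big(x_\lambda y + x\lhd_\lambda \varphi(y)+\varphi(x)\rightharpoonup_\lambda y\big)=\alpha(x)_\lambda\alpha(y)+\alpha(x)\lhd_\lambda\phi(\alpha(y))+\phi(\alpha(x))\rightharpoonup_\lambda\alpha(y)$ for all $x,y\in Q$ is identical to the displayed relation defining $\varphi\equiv\phi$. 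This is a direct expansion: applying $\alpha$ termwise on the left, then transposing $\alpha(x\lhd_\lambda\varphi(y))$ and $\alpha(\varphi(x)\rightharpoonup_\lambda y)$ to the right-hand side, rearranges the homomorphism identity into exactly the stated equivalence relation. Hence $\varphi\equiv\phi$ holds if and only if $\alpha$ is an associative conformal algebra isomorphism $Q_\varphi\xrightarrow{\sim}Q_\phi$; in other words, $\varphi\equiv\phi$ if and only if $Q_\varphi\cong Q_\phi$.

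Granting this characterization, the remaining claims are formal. Since isomorphism of associative conformal algebras is reflexive, symmetric and transitive, so is $\equiv$: reflexivity corresponds to the choice $\alpha=\mathrm{Id}_Q$, while symmetry and transitivity follow by inverting and composing the witnessing $\mathbb{C}[\partial]$-module isomorphisms. The assignment $\overline{\varphi}\mapsto Q_\varphi$ is well-defined and injective precisely because $\varphi\equiv\phi\iff Q_\varphi\cong Q_\phi$. Finally, surjectivity is delivered by Theorem \ref{t3}: any complement $\overline{Q}$ of $A$ in $E$ is isomorphic to $Q_\varphi$ for some deformation map $\varphi$, so every isomorphism class in $\mathcal{F}(A,E)$ lies in the image, and the map is a bijection.

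I expect the main obstacle to be the bookkeeping in the central computation rather than any conceptual difficulty. Unlike the Lie case, the associative matched pair carries the four actions $(\lhd_\lambda, \rhd_\lambda, \leftharpoonup_\lambda,\rightharpoonup_\lambda)$, and the product on $Q_\varphi$ couples the right action $\lhd_\lambda$ with the left action $\rightharpoonup_\lambda$; care is needed to keep the $\lambda$-arguments and the left/right roles straight when expanding $\alpha(x)_\lambda\alpha(y)$ in $Q_\phi$, so that the $\phi$-terms appear with arguments $\alpha(x),\alpha(y)$ while the $\varphi$-terms retain arguments $x,y$ inside $\alpha$. Once these are aligned, the match with the definition of $\equiv$ is exact, and the bijection follows as above.
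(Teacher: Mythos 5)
Your proposal is correct and follows exactly the paper's (very terse) argument: the paper proves Theorem \ref{t2} by observing that $Q_\phi\cong Q_\varphi$ if and only if $\phi\equiv\varphi$ and then invoking Theorem \ref{t1}, and the proof of Theorem \ref{t4} is declared to be the same with Theorem \ref{t3} in place of Theorem \ref{t1}. Your central computation---that the defining relation of $\equiv$ is precisely the rearranged statement that $\alpha$ is an associative conformal algebra isomorphism $Q_\varphi\to Q_\phi$---is exactly the detail the paper leaves implicit, so your write-up is a faithful, fleshed-out version of the paper's proof.
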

\begin{proof}
It is similar to that in Theorem \ref{t2}.
\end{proof}
\begin{rmk}
By Theorem \ref{t4}, $[E: A]=\mid \mathcal{HA}^2(Q,A\mid(\lhd_\lambda,  \rhd_\lambda, \leftharpoonup_\lambda,\rightharpoonup_\lambda)) \mid$.
Moreover, with the condition in Theorem \ref{t4}, when $A$ is an ideal of $E$, it is easy to see that $[E: A]=1$, i.e. all complements of $A$ in $E$ are isomorphic to $Q$.
\end{rmk}

\begin{ex}
Let $E=\mathbb{C}[\partial]e_1\oplus \mathbb{C}[\partial]e_2 \oplus \mathbb{C}[\partial]e_3
\oplus \mathbb{C}[\partial]e_4$ be an associative conformal algebra with the following nontrivial $\lambda$-products as follows:
\begin{eqnarray}
{e_1}_\lambda e_2= e_1,~~~{e_2}_\lambda e_2=e_2,~~{e_2}_\lambda e_3=e_3,~~{e_2}_\lambda e_4=e_4.
\end{eqnarray}
Set $A=\mathbb{C}[\partial]e_1\oplus \mathbb{C}[\partial]e_2$ and $Q=\mathbb{C}[\partial]e_3
\oplus \mathbb{C}[\partial]e_4$ be two subalgebras of $E$. It is easy to see that $E$ is the bicrossed product of $A$ and $Q$ with $\lhd_\lambda$, $\rhd_\lambda$ and $\leftharpoonup_\lambda$ trivial and
$\rightharpoonup_\lambda$ satisfying
\begin{eqnarray*}
{e_1}\rightharpoonup_\lambda e_3= {e_1}\rightharpoonup_\lambda e_4=0,~~{e_2}\rightharpoonup_\lambda e_3=e_3,~~{e_2}\rightharpoonup_\lambda e_4=e_4.
\end{eqnarray*}
For any deformation map $\varphi:Q\rightarrow A$, by the definition of deformation map and some computations, it is easy to see that $\varphi$ is of the following form:\\
(1) $\varphi(e_3)=f(\partial)e_1$, $\varphi(e_4)=h(\partial)e_1$ for some $f(\partial)$, $g(\partial)\in \mathbb{C}[\partial]$;\\
(2) $\varphi(e_3)=0$, $\varphi(e_4)=a e_1+b e_2$ where $a\in \mathbb{C}$, $b\in \mathbb{C}\setminus\{0\}$;\\
(3) $\varphi(e_3)=c e_1+d e_2$, $\varphi(e_4)=0$ where $c\in \mathbb{C}$, $d\in \mathbb{C}\setminus\{0\}$;\\
(4) $\varphi(e_3)=a e_1+b e_2$, $\varphi(e_4)=c e_1+d e_2$ where $a$, $b$, $c$, $d\in \mathbb{C}\setminus\{0\}$ and $ad=bc$.

Note that the $\varphi$-deformation of $Q$ corresponding to (1) is just $Q$.  The $\varphi$-deformation $Q_b$ of $Q$ corresponding to (2) can be described as follows:
\begin{eqnarray*}
{e_3}_\lambda e_3=0,~~{e_3}_\lambda e_4=0,~~{e_4}_\lambda e_3=b e_3,~~{e_4}_\lambda e_4=b e_4,
\end{eqnarray*}
where $b\in \mathbb{C}\setminus\{0\}$. Similarly,  the $\lambda$-products of the $\varphi$-deformation $\overline{Q}_d$ of $Q$ corresponding to (3) are as follows:
\begin{eqnarray*}
{e_3}_\lambda e_3=de_3,~~{e_3}_\lambda e_4=de_4,~~{e_4}_\lambda e_3=0,~~{e_4}_\lambda e_4=0,
\end{eqnarray*}
where $d\in \mathbb{C}\setminus\{0\}$. The $\varphi$-deformation $Q_{b,d}$ of $Q$ corresponding to (4) can be described as follows:
\begin{eqnarray*}
{e_3}_\lambda e_3=be_3,~~{e_3}_\lambda e_4=be_4,~~{e_4}_\lambda e_3=d e_3,~~{e_4}_\lambda e_4=d e_4,
\end{eqnarray*}
where $b$, $d\in \mathbb{C}\setminus\{0\}$. Obviously, $Q_b\cong \overline{Q}_d\cong Q_1$ and $Q_{b,d}\cong Q_{1,1}$. Moreover, $Q$, $Q_1$ and $Q_{1,1}$ are not isomorphic to each other. Therefore, the set $\mathcal{F}(A,E)$ can be descried as $Q$, $Q_1$ and $Q_{1,1}$. Consequently, $[E:A]=3$.
\end{ex}

\end{document}